\newcounter{minutes}\setcounter{minutes}{\time}
\newcounter{hours}\setcounter{hours}{\time}
\newtheorem{theorem}{Theorem}[section]
\newtheorem{corollary}{Corollary}[section]
\title[Applications of ITCM]{APPLICATIONS OF INTEGRAL TRANSFORMS COMPOSITION METHOD TO
	WAVE--TYPE SINGULAR DIFFERENTIAL EQUATIONS
AND INDEX SHIFT TRANSMUTATIONS }
\author[A. Fitouhi ]
		{Ahmed Fitouhi}
		\address{ Ahmed Fitouhi\newline
			University of Tunis El Manar, Tunis, Tunisia}
		\email{Ahmed.Fitouhi@fst.rnu.tn}
\author[I. Jebabli ]
	{In\`{e}ss Jebabli}
\address{ In\`{e}ss Jebabli\newline
	University of Tunis El Manar, Tunis, Tunisia}
\email{jebabli.iness@hotmail.fr}
	\author[E. L. Shishkina ]
	{Elina L. Shishkina}
	\address{Elina L. Shishkina \newline
		Voronezh State University,  Voronezh, Russia}
	\email{ilina\_dico@mail.ru}
\author[S. M. Sitnik ]
		{Sergei M. Sitnik}
		\address{ Sergei M. Sitnik\newline
			Belgorod State National Research University ("BSU"), Belgorod, Russia}
		\email{sitnik@bsu.edu.ru}
\keywords{Transmutations; Integral Transforms Composition Method (ITCM); Bessel operator; Wave--type equations; Singular differential equations; Hankel transform}
\subjclass[2010]{26A33, 44A15}
\begin{document}

\def\thefootnote{}
\footnotetext{ \texttt{File:~\jobname .tex,
          printed: \number\year-0\number\month-\number\day,
          \thehours.\ifnum\theminutes<10{0}\fi\theminutes}
} \makeatletter\def\thefootnote{\@arabic\c@footnote}\makeatother

\maketitle

\begin{abstract}
In the paper we study applications of integral transforms composition method (ITCM) for obtaining transmutations via integral transforms. It is possible to derive wide range of transmutation operators by this method. Classical integral transforms are involved in the integral transforms composition method (ITCM) as basic blocks, among them are Fourier, sine and cosine--Fourier, Hankel, Mellin, Laplace and some generalized transforms. The ITCM  and transmutations obtaining by it are applied to deriving connection formulas for solutions of singular differential equations and more simple non--singular ones. We consider well--known classes of singular differential equations with Bessel operators, such as classical and generalized Euler--Poisson--Darboux equation and  the generalized radiation problem of A.Weinstein. Methods of this paper are applied to more general linear partial differential equations with Bessel operators, such as multivariate Bessel--type equations, GASPT (Generalized Axially Symmetric Potential Theory)  equations of A.Weinstein,  Bessel--type generalized wave equations with variable coefficients,ultra B--hyperbolic equations and others. So with many results and examples the main conclusion of this paper  is illustrated:  \textbf{\textit{the integral transforms composition method (ITCM) of constructing transmutations is very important and effective tool also for obtaining connection formulas and explicit representations of solutions  to a wide class of singular differential equations, including ones with Bessel operators.}}
\end{abstract}

\maketitle
\numberwithin{equation}{section}
\allowdisplaybreaks

\section{Introduction}\label{sec1}

An important field of applications of differential equations is
the study of linear wave processes.
The classical wave equation on the plane
\begin{equation}\label{Wave1}
\frac{\partial^2 u}{\partial t^2}=a^2\frac{\partial^2 u}{\partial x^2},\qquad u=u(x,t),\qquad t>0,\qquad  a=const
\end{equation}
is  generalized in different directions.
Such generalizations include the telegraph equation and the Helmholtz equation
\begin{equation}\label{Wave2}
\frac{\partial^2 u}{\partial t^2}=a^2\frac{\partial^2 u}{\partial x^2}\pm \lambda^2 u,\qquad a=const, \qquad \lambda=const,
\end{equation}
as well as wave equation with one or two  potential functions $p(t), \ q(x)$
\begin{equation}\label{Wave3}
\frac{\partial^2 u}{\partial t^2}+p(t)u=a^2\frac{\partial^2 u}{\partial x^2}+q(x)u.
\end{equation}
Methods of solution to equations \eqref{Wave1}-\eqref{Wave2} are set out in a large number of classical textbooks and monographs (see, for example, \cite{Curant,Evans}).
For study of equations of the type \eqref{Wave3} the results, techniques and ideas of the methods of transmutation operators were mostly used directly or implicitly (see \cite{Levitan1,Levitan2,Levitan3,Lesha1,Lesha2,Hromov}).

The next step for generalization of wave--type equations (\ref{Wave1}--\ref{Wave3}) is replacing
one or two second derivatives by the Bessel operator (see \cite{kipr})
\begin{equation}\label{Bessel}
B_\nu=\frac{\partial^2}{\partial y^2}+\frac{\nu}{y}\frac{\partial}{\partial y},\qquad y>0,\qquad
\nu=const.
\end{equation}
In this way we obtain singalar PDEs since
 one of the operator coefficients $\frac{\nu}{y}$ tends to infinity in some sense as $x\rightarrow 0$ (see \cite{CSh}).

When Bessel operator acts by space variable $x$ we obtain a {\bf generalisation of wave equation with axial or central symmetry}
\begin{equation}\label{EPDWe}
\frac{\partial^2 u}{\partial t^2}=
\frac{\partial^2 u}{\partial x^2}+\frac{\nu}{x}\frac{\partial u}{\partial x},\qquad u=u(x,t),\qquad x>0,\qquad t\in\mathbb{R},\qquad
\nu=const.
\end{equation}
Representations of the solution of (\ref{EPDWe}) were derived by Poisson in \cite{Poisson1821}.
The initial conditions for (\ref{EPDWe}) have the form
$$
u(x,0)=f(x),\qquad u_t(x,0)=g(t).
$$

When Bessel operator acts by time variable $t$ we get
{\bf Euler--Poisson--Darboux (EPD) equation}
\begin{equation}\label{EPD0}
\frac{\partial^2 u}{\partial t^2}+\frac{\nu}{t}\frac{\partial u}{\partial t}=
a^2\frac{\partial^2 u}{\partial x^2},\qquad u=u(x,t),\qquad t>0,\qquad x\in\mathbb{R},\qquad
a, \ \nu=const.
\end{equation}
EPD equation first appeared  in  Euler's work (see \cite{Euler}, p. 227) and further was studied by Poisson in \cite{Poisson}, Riemann in \cite{Riman} and  Darboux  in \cite{Darboux}.
Initial conditions for (\ref{EPD0}) have the form
$$
u(x,0)=f(x),\qquad t^\nu u_t(x,t)|_{t=0}= g(x).
$$

In the case of many space variables and for $\nu>n-1$ Diaz and Weinberger \cite{Diaz} obtained solutions of the Cauchy problem
\begin{equation}\label{EPD1}
\frac{\partial^2 u}{\partial t^2}+\frac{\nu}{t}\frac{\partial u}{\partial t}=
\sum\limits_{i=1}^n\frac{\partial^2 u}{\partial x_i^2},\qquad u=u(x,t),\qquad t>0,\qquad x\in\mathbb{R}^n,\qquad
\nu=const,
\end{equation}
\begin{equation}\label{UslEPD1}
u(x,0)=f(x),\qquad u_t(x,0)=0.
\end{equation}
For any real $\nu$ problem (\ref{EPD1}--\ref{UslEPD1}) was solved by  Weinstein   (cf. \cite{Weinstein0,Weinstein12,Weinstein13}).

{\bf Generalised Euler--Poisson--Darboux (GEPD) equation}
\begin{equation}\label{EPDWG}
\frac{\partial^2 u}{\partial t^2}+\frac{\nu}{t}\frac{\partial u}{\partial t}=
\frac{\partial^2 u}{\partial x^2}+\frac{k}{x}\frac{\partial u}{\partial x},\qquad u=u(x,t),\qquad t>0,\qquad x>0,\qquad
\nu,k=const
\end{equation}
and its multidimensional generalization
\begin{equation}\label{EPDM}
\frac{\partial^2 u}{\partial t^2}+\frac{\nu}{t}\frac{\partial u}{\partial t}=
\sum\limits_{i=1}^n\left( \frac{\partial^2 u}{\partial x_i^2}+\frac{k_i}{x_i}\frac{\partial u}{\partial x_i}\right),
\end{equation}
$$
 u=u(x_1,..,x_n,t),\qquad t>0,\qquad x_i>0,\qquad
 \nu,k_i=const, \qquad i=1,..,n
$$
were considered in \cite{CSh,Fox,LPSh1,LPSh2,ShSitPul,Smirnov0,Smirnov1,Tersenov}.

EPD equations with spectral parameters
\begin{equation}\label{EPDSP}
\frac{\partial^2 u}{\partial t^2}+\frac{\nu}{t}\frac{\partial u}{\partial t}=
\frac{\partial^2 u}{\partial x^2}\pm\lambda^2 u,\qquad \lambda\in\mathbb{R}
\end{equation}
were studied in \cite{Bresters2,Smirnov0} and GEPD equation with spectral parameter
\begin{equation}\label{EPDSP1}
\frac{\partial^2 u}{\partial t^2}+\frac{\nu}{t}\frac{\partial u}{\partial t}=
\sum\limits_{i=1}^n\left( \frac{\partial^2 u}{\partial x_i^2}+\frac{k_i}{x_i}\frac{\partial u}{\partial x_i}\right) -\lambda^2 u,\qquad \lambda\in\mathbb{R}
\end{equation}
was solved in \cite{ShishKlein}.
GEPD equation with  potential $q(x)$
\begin{equation}\label{PotEPD1}
\frac{\partial^2 u}{\partial t^2}+\frac{\nu}{t}\frac{\partial u}{\partial t}=
\frac{\partial^2 u}{\partial x^2}+\frac{k}{x}\frac{\partial u}{\partial x}+q(x)u
\end{equation}
was studied in  \cite{Sta,volk}.

For the case of (\ref{PotEPD1}) with one variable potential function the  effective spectral parameter
power series (SPPS) method was developed by V.V.Kravchenko and his coathors in \cite{Castillo1,KTS,KTC2017}.
But earlier similar series for deriving transmutations for perturbed Bessel operators were considered in \cite{CFH,FH}.

Results and problems for abstract EPD equation
\begin{equation}\label{AbsEPD}
\frac{\partial^2 u}{\partial t^2}+\frac{\nu}{t}\frac{\partial u}{\partial t}=
Au,\qquad u=u(x,t),\qquad t>0,\qquad x\in\mathbb{R}^n,\qquad
\nu=const,
\end{equation}
where $A$ is a closed densely defined operator in  Hilbert, Banach or Fr\'{e}chet spaces   were summed up and mainly initiated in \cite{CSh,Car1}. Problems in Banach or Hilbert spaces with abstract Bessel operators were studied after that in  many papers, e.g. \cite{GKSh,Glushak0,Glushak1,Glushak2,Glushak3}. But it seems that studies of abstract differential equations in Fr\'{e}chet spaces somehow ceased after initial impulse from \cite{CSh,Car1}, but such problems are very important as for example differential equations on half--spaces or other unbounded spaces need Fr\'{e}chet spaces, and not Hilbert or Banach ones.
The EPD/GEPD--type singular differential equations  appear in different applied problems  as well as  mathematical problems in partial differential equations,  harmonic analysis, generalised translation theory, transmutation theory, numerical analysis and so on, cf. \cite{CSh,Car1,Car2,Car3,Smirnov1,Tersenov,Weinstein0,Rad2,Sit2,LPSh1,LPSh2,ShishKlein,ShSitPul}.

Different equations with Bessel operators  (\ref{EPDWe}--\ref{EPDSP1}) are special cases of perturbed general linear differential equation with nonconstant coefficients
\begin{equation}\label{GenEPD}
\sum\limits_{k=1}^n A_k\left(   \frac{\partial^2 u}{\partial x_k^2}+\frac{\nu_k}{x_k}\frac{\partial u}{\partial x_k}\right) \pm \lambda^2 u=0,
\end{equation}
$$
x_k>0,\qquad A_k=const,\qquad \nu_k=const,\qquad \lambda=const,
$$
corresponding to non--perturbed linear differential equation with constant coefficients
\begin{equation}\label{GenWave}
\sum\limits_{k=1}^n A_k  \frac{\partial^2 v}{\partial x_k^2}\pm \lambda^2 v=0.
\end{equation}

The main topic of this paper is to study a general method based on transmutations --- we call it {\bf integral transforms composition method (ITCM)} --- to derive connection formulas representing solutions of equation (\ref{GenEPD}) $u(x)$  via solutions of the same equation but with different parameters $\nu_k$. And as special case it reduces to connection formulas among solutions of perturbed equation (\ref{GenEPD}) $u(x)$ and solutions of non--pertubed equations $v(x)$ from (\ref{GenWave}) and v.v.
Such relations are called {\bf parameter shift formulas}. Aforesaid formulas arise when classical wave equation is solved by mean values method. The descent parameter in this case is the space dimension. Essentially,
such parameter shift formulas define transmutation operators which are responsible for connection formulas among solutions of perturbed and non--perturbed equations.
To construct the necessary transmutation operator we use the integral transforms composition method (ITCM)  introduced and thoroughly developed in \cite{Sit0,Sit01,Sit1,Sit2,Sit3,Sit4}. The essence of this method is to construct the necessary transmutation operator and corresponding connection formulas among solutions of perturbed and non--perturbed equations as a composition of classical integral transforms with properly chosen
weighted functions.

We note that other possible generalizations of considered equations are equations with fractional powers of the Bessel operator considered in \cite{McBArt,Ida,Dim,Kir1,Sita1,Sita2,SitSh1,SitSh2}. In fractional differential equations theory the so called "principle of subordination"\, was proposed (cf. \cite{Jan,EIK,Bajlekova0,Bajlekova1}). In the cited literature principle of subordination is reduced to  formulas  relating the solutions to equations of various fractional orders. A special case of subordination principle are formulas connected solutions of fractional differential equations to solutions of integer order equations. Such formulas are also in fact parameter shift formulas, in which the parameter is the order of fractional DE. So a popular "principle of subordination"\, may be considered as an example of parameter shift formulas, and consequently is in close connection with transmutation theory and ITCM developed in this paper.

Note that we specially restrict ourself to linear problems, but of course nonlinear problems are also very important, cf. \cite{Rad3,Rad4} for further references.

We also introduce and use convenient hybrid terminology of I.A.Kipriyanov (\textbf{\textit{B--elliptic, B--parabolic, B--hyperbolic differential equations}}) for differential equations with Bessel operators and of R.Carroll (\textbf{\textit{elliptic, parabolic, hyperbolic transmutations}}). As a result we use terms \textbf{\textit{B--elliptic, B--parabolic and B--hyperbolic transmutations}} for those ones which intertwine Bessel operators to first (B--parabolic) and $\pm$ second derivatives (B--elliptic and B--hyperbolic).

\section{Basic definitions}

Here we provide definitions and brief information on the special functions, classes of functions and integral transforms.

The hypergeometric function is defined for $|x| < 1$ by the power series
\begin{equation}\label{2F1}
 {}_{2}F_{1}(a,b;c;x)=\sum _{n=0}^{\infty }{\frac {(a)_{n}(b)_{n}}{(c)_{n}}}{\frac {x^{n}}{n!}}.
\end{equation}

 The Bessel function of the first kind of order $\gamma$ is  defined  by its series expansion around $x=0$:
$$
J_{\gamma }(x)=\sum _{m=0}^{\infty }{\frac {(-1)^{m}}{m!\,\Gamma (m+\gamma +1)}}{\left({\frac {x}{2}}\right)}^{2m+\gamma}
$$
is (see \cite{Watson}).

Let $S$ be the space of rapidly decreasing functions on $(0,\infty)$
$$
S=\left\{f\in C^\infty(0,\infty):\sup _{x\in (0,\infty)}\left|x^{\alpha }D^{\beta }f(x)\right|<\infty \quad \forall \alpha ,\beta \in {Z} _{+}\right\}.
$$

The Hankel transform of order $\nu$ of a function $f\in S$ is given by:
\begin{equation}\label{SK11}
H_\nu [f](x)=\widehat{f}(x)=\int\limits_{0}^{\infty }{j}_{\frac{\nu-1}{2}}(xt) f(t) t^\nu dt,\qquad
\nu\neq -1,-3,-5,...
\end{equation}
where
\begin{equation}\label{HSM}
j_\gamma(t){=}2^\gamma\,\Gamma(\gamma+1)\frac{J_\gamma(t)}{t^\gamma}.
\end{equation}
  The Hankel transform defined in this way is also its own inverse up to a constant:
$$H_\nu^{-1}[\widehat{f}](x)=f(x)=\frac{2^{1-\nu}}{\Gamma^2\left(\frac{\nu+1}{2}\right)}
\int\limits_{0}^{\infty} {j}_{\frac{\nu-1}{2}} (x\xi)\,
\widehat{f}(\xi)\xi^\nu\,d\xi.
$$
The Hankel transform and its inverse work for all functions in $L^2(0, \infty)$.




\section{What is ITCM and how to use it?}

In transmutation theory explicit operators were derived based on different ideas and methods, often not connecting altogether. So there is an urgent need in transmutation theory to develop a general method for obtaining known and new classes of transmutations.

In this section we give such general method for constructing  transmutation operators.
We call this method \textbf{\textit{integral transform composition method}} or shortly ITCM.
The method is based on the representation of transmutation operators as  compositions of basic  integral transforms.
The  \textbf{\textit{integral transform composition method}} (ITCM) gives the algorithm not only for constructing new transmutation operators, but also for all now explicitly known classes of transmutations, including Poisson, Sonine, Vekua--Erdelyi--Lowndes, Buschman-Erdelyi, Sonin--Katrakhov and Poisson--Katrakhov ones, cf.
\cite{CSh,Car1,Car2,Car3,Sit0,Sit01,Sit1,Sit2,Sit3,Sit4}
 as well as the classes of elliptic, hyperbolic and parabolic transmutation operators introduced by R. Carroll \cite{Car1,Car2,Car3}.

The formal algorithm of ITCM is the next. Let us take as input a pair of arbitrary operators $A,B$, and also connecting with them generalized Fourier transforms $F_A, F_B$, which are invertible and act by the formulas
\begin{equation}
\label{4301}
F_A A =g(t) F_A,\ \ \  F_B B= g(t) F_B,
\end{equation}
$t$ is a dual variable, $g$ is an arbitrary function with suitable properties.
It is often convenient to choose $g(t)=-t^2$ or $g(t)=-t^\alpha$, $\alpha\in\mathbb{R}$.

\textbf{\textit{Then the essence of ITCM is to  obtain formally a pair of  transmutation operators $P$ and $S$ as the method output by the next formulas:
\begin{equation}\label{Comp}
S=F^{-1}_B \frac{1}{w(t)} F_A,\qquad P=F^{-1}_A w(t) F_B
\end{equation}
with  arbitrary function $w(t)$.
When $P$ and $S$  are
transmutation operators intertwining $A$ and $B$}}:
\begin{equation}\label{Inter}
SA=BS,\qquad PB=AP.
\end{equation}
A formal checking of (\ref{Inter}) can be obtained by direct substitution.
The main difficulty is the calculation of  compositions (\ref{Comp})
in an explicit integral form, as well as the choice of   domains of operators $P$ and $S$.

Let us list main advantages of Integral Transform Composition Method (ITCM).

\begin{itemize}
\item{Simplicity --- many classes of transmutations are obtained by explicit formulas from elementary basic blocks, which are classical integral transforms.}
\item{ITCM gives by a unified approach all previously explicitly known classes of transmutations.}
\item{ITCM gives by a unified approach many new classes of transmutations for different  operators.}
\item{ITCM gives  a unified approach to obtain both direct and inverse transmutations in the same composition form.}
\item{ITCM directly leads to estimates of norms of direct and inverse transmutations using known norm estimates for classical integral transforms on different functional spaces.}
\item{ITCM directly leads to connection formulas for solutions to perturbed and unperturbed differential equations.}
\end{itemize}

Some obstacle to apply ITCM is the next one: we know acting of classical integral transforms usually on standard  spaces like $L_2, L_p, C^k$, variable exponent Lebesgue spaces \cite{Rad1} and so on. But for application of transmutations to differential equations we usually need some more conditions hold, say at zero or at infinity. For these problems we may first construct a transmutation by ITCM  and then expand it to the needed functional classes.

Let us stress that formulas of the type \eqref{Comp} of course are not new for integral transforms and its applications to differential equations. \textbf{\textit{But ITCM  is new when applied to transmutation theory!}} In other fields of integral transforms and connected differential equations theory compositions \eqref{Comp} for the choice of classical Fourier transform leads to famous pseudo--differential operators with symbol function $w(t)$. For  the choice of the classical Fourier transform and the function
 $w(t)=(\pm it)^{-s}$ we get fractional integrals on the whole real axis, for $w(t)=|x|^{-s}$ we get M.Riesz potential, for
 $w(t)=(1+t^2)^{-s}$ in formulas \eqref{Comp} we get Bessel potential and for $w(t)=(1\pm it)^{-s}$ --- modified Bessel potentials \cite{SKM}.

The next choice for ITCM  algorithm:
\begin{equation}\label{trans}
A=B=B_\nu, F_A=F_B=H_\nu, g(t)=-t^2, w(t)=j_\nu(st)
\end{equation}
leads to generalized translation operators of Delsart \cite{Levitan1,Levitan2,Levitan3}, for this case we have to choose in ITCM algorithm defined by \eqref{4301}--\eqref{Comp} the above values \eqref{trans} in which $B_\nu$ is the Bessel operator \eqref{Bessel}, $H_\nu$ is the Hankel transform \eqref{SK11}, $j_\nu$ is the normalized (or "small") Bessel function \eqref{HSM}. In the same manner other families of operators commuting with a given one may be obtained by ITCM for the choice $A=B, F_A=F_B$ with arbitrary functions $g(t), w(t)$ (generalized translation commutes with the Bessel operator).
In case of the choice of differential operator $A$ as quantum oscillator and connected integral transform $F_A$ as fractional or quadratic Fourier transform \cite{OZK} we may obtain by ITCM transmutations also for this case \cite{Sit2}. It is  possible to apply ITCM instead of classical approaches for obtaining fractional powers of Bessel operators \cite{Sita1,Sita2,Sit2,SitSh1,SitSh2}.

Direct applications of ITCM to multidimensional differential operators are obvious, in this case $t$ is a vector and $g(t),w(t)$ are vector functions in \eqref{4301}--\eqref{Comp}. Unfortunately for this case we know and may derive some new explicit transmutations just for simple special cases. But among them are well--known and interesting classes of potentials. In case of using ITCM by \eqref{4301}--\eqref{Comp} with Fourier transform and  $w(t)$ --- positive definite quadratic form we come to elliptic M.Riesz potentials \cite{Riesz,SKM}; with  $w(t)$ --- indefinite quadratic form we come to hyperbolic M.Riesz potentials \cite{Riesz,SKM,Nogin}; with $w(x,t)=(|x|^2-it)^{-\alpha/2}$  we come to parabolic potentials \cite{SKM}. In case of using ITCM by \eqref{4301}--\eqref{Comp} with Hankel transform and  $w(t)$ --- quadratic form we come to elliptic M.Riesz B--potentials \cite{Lya1,Gul1} or hyperbolic M.Riesz B--potentials \cite{Shi1}. For all above mentioned potentials we need to use distribution theory and consider for ITCM convolutions of distributions, for inversion of such potentials we need some cutting and approximation procedures, cf. \cite{Nogin,Shi1}. For this class of problems it is appropriate to use Schwartz or/and Lizorkin spaces for probe functions and dual spaces for distributions.

So we may conclude that the method we consider in the paper for obtaining transmutations --- ITCM is effective, it is connected to many known methods and problems, it gives all known classes of explicit transmutations and works as a tool to construct new classes of transmutations. Application of ITCM needs the next three steps.

\begin{itemize}
\item{Step 1. For a given pair of operators $A,B$ and connected generalized Fourier transforms $F_A, F_B$ define and calculate a pair of transmutations $P,S$ by basic formulas \eqref{4301}--\eqref{Comp}.}
\item{Step 2. Derive exact conditions and find classes of functions for which  transmutations obtained by step 1 satisfy proper intertwining properties.}
\item{Step 3. Apply now correctly defined transmutations by steps 1 and 2 on proper classes of functions to deriving connection formulas for solutions of differential equations.}
\end{itemize}

Due to this plan the next part of the paper is organized as follows. First we illustrate step 1 of the above plan and apply ITCM for obtaining some new and known transmutations. For step 2 we prove a general theorem for the case of Bessel operators, it is enough to solve problems to complete strict definition of transmutations. And after that we give an example to illustrate step 3 of applying obtained by ITCM  transmutations to derive formulas for solutions of a model differential equation.

\section{Application of ITCM to index shift $B$--hyperbolic   transmutations}

In this section  we apply ITCM to obtain integral representations for  index shift $B$--hyperbolic   transmutations. It corresponds to step 1 of the above plan for ITCM algorithm.

Let us look for the operator $T$ transmuting the operator $B_\nu$ defined by (\ref{Bessel}) into the same operator but with another parameter $B_\mu$.
To find such a transmutation  we use ITCM with Hankel transform. Applying ITCM we obtain an interesting and important family of transmutations, including index shift $B$--hyperbolic   transmutations, "descent" operators, classical Sonine and Poisson--type transmutations, explicit integral representations for fractional powers of the Bessel operator, generalized translations of Delsart and others.

So we are looking for an operator $T_{\nu, \, \mu}^{(\varphi)}$ such that
\begin{equation}\label{449}
T_{\nu, \, \mu}^{(\varphi)} B_{\nu} = B_{\mu} T_{\nu, \, \mu}^{(\varphi)}
\end{equation}
in the factorised due to ITCM form
\begin{equation}\label{4410}
T_{\nu, \, \mu}^{(\varphi)} = H_{\mu}^{-1} \biggl( \varphi(t) H_{\nu}\biggr),
\end{equation}
where $H_\nu$ is a Hankel transform \eqref{SK11}.
Assuming $\varphi (t) = Ct^{\alpha}$, $C\in\mathbb{R}$ does not depend on $t$ and $T^{(\varphi)}_{\nu,\, \mu}=T^{(\alpha)}_{\nu,\, \mu}$ we can derive the  following theorem.

\begin{theorem} Let $f\in L^2(0, \infty)$,
	$$
	{\rm Re}\,(\alpha+\mu+1)>0;\qquad {\rm Re}\,\left(\alpha+\frac{\mu-\nu}{2} \right)<0.
	$$
	Then for transmutation operator $T^{(\alpha)}_{\nu,\, \mu}$ obtained by ITCM
		and such that
		$$
		T^{(\alpha)}_{\nu,\, \mu}  B_{\nu} = B_{\mu} T^{(\alpha)}_{\nu,\, \mu}
		$$
	the next integral representation is true
		$$
		\left( T^{(\alpha)}_{\nu,\, \mu} f\right)(x) =C\,\cdot\frac{2^{\alpha+3}\Gamma\left(\frac{\alpha+\mu+1}{2}\right)}{\Gamma\left(\frac{\mu+1}{2}\right)}\times
		$$
		$$
		\times\left[\frac{x^{-1-\mu-\alpha}}
		{\Gamma\left(-\frac{\alpha}{2}\right)} \,\int\limits_{0}^{x}f(y)
		{_2F_1}\left( \frac{\alpha+\mu+1}{2}, \frac{\alpha}{2}+1; \frac{\nu+1}{2}; \frac{y^2}{x^2}\right)y^\nu dy+\right.
		$$
		$$
		+\frac{\Gamma\left(\frac{\nu+1}{2}\right)
		}{\Gamma\left(\frac{\mu+1}{2}\right)\Gamma\left(\frac{\nu-\mu-\alpha}{2}\right)}
		\,\int\limits_{x}^{\infty }f(y)\times
		$$	
		\begin{equation}\label{Theo1}
		\left.
		\times
		{_2F_1}\left( \frac{\alpha+\mu+1}{2}, \frac{\alpha+\mu-\nu}{2}+1;  \frac{\mu+1}{2}; \frac{x^2}{y^2}\right)y^{\nu-\mu-\alpha-1}dy\right].
		\end{equation}
		where ${_2F_1}$ is the Gauss hypergeometric function.
	\end{theorem}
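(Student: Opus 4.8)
The plan is to start from the factorised definition (\ref{4410}) with $\varphi(t)=Ct^\alpha$ and write everything out explicitly. Using the definition (\ref{SK11}) of the Hankel transform together with the inversion formula $H_\mu^{-1}$ quoted just after (\ref{HSM}), the operator becomes the iterated integral
$$
\left(T^{(\alpha)}_{\nu,\mu}f\right)(x)=\frac{2^{1-\mu}C}{\Gamma^2\!\left(\frac{\mu+1}{2}\right)}\int_0^\infty j_{\frac{\mu-1}{2}}(x\xi)\,\xi^{\alpha+\mu}\left(\int_0^\infty j_{\frac{\nu-1}{2}}(\xi y)f(y)\,y^\nu\,dy\right)d\xi .
$$
First I would interchange the order of integration (Fubini, after a suitable truncation/regularisation, which is where the $L^2$ hypothesis and the two inequalities on $\alpha,\mu,\nu$ enter), so that the operator acquires the kernel form $\int_0^\infty K(x,y)f(y)\,y^\nu\,dy$ with
$$
K(x,y)=\frac{2^{1-\mu}C}{\Gamma^2\!\left(\frac{\mu+1}{2}\right)}\int_0^\infty j_{\frac{\mu-1}{2}}(x\xi)\,j_{\frac{\nu-1}{2}}(\xi y)\,\xi^{\alpha+\mu}\,d\xi .
$$

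The heart of the argument is the inner $\xi$-integral. Using the normalisation (\ref{HSM}) to convert each $j_\gamma$ back to an ordinary Bessel function $J_\gamma$, the kernel reduces, up to explicit powers of $x,y$ and gamma factors, to the Weber--Schafheitlin discontinuous integral
$$
\int_0^\infty J_{\frac{\mu-1}{2}}(x\xi)\,J_{\frac{\nu-1}{2}}(y\xi)\,\xi^{\alpha+\frac{\mu-\nu}{2}+1}\,d\xi .
$$
A direct check shows the two hypotheses are exactly its convergence conditions: the behaviour at $\xi=0$ forces ${\rm Re}(\alpha+\mu+1)>0$, while the oscillatory behaviour at $\xi=\infty$ (for $x\neq y$) forces ${\rm Re}\left(\alpha+\frac{\mu-\nu}{2}\right)<0$. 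I would then quote the classical closed form of this integral (Watson, \S13.4, or the corresponding Gradshteyn--Ryzhik entry), which is genuinely \emph{discontinuous} across $x=y$: for $y<x$ it is a ${}_2F_1$ in $y^2/x^2$ with denominator parameter $\frac{\nu+1}{2}$ (the order of the Bessel factor carrying the smaller scale, plus one), and for $y>x$ a ${}_2F_1$ in $x^2/y^2$ with denominator parameter $\frac{\mu+1}{2}$. This dichotomy is precisely what splits the final answer into the $\int_0^x$ piece and the $\int_x^\infty$ piece of (\ref{Theo1}).

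What remains is bookkeeping: substitute the two branches of the Weber--Schafheitlin formula into $K(x,y)$, collect the powers of $2$ and the factors $x^{-\frac{\mu-1}{2}}$, $y^{-\frac{\nu-1}{2}}$ produced by the $j\to J$ conversion together with the $y^\nu$ already present, and simplify the resulting products of gamma functions to match the constants in (\ref{Theo1}). The intertwining relation $T^{(\alpha)}_{\nu,\mu}B_\nu=B_\mu T^{(\alpha)}_{\nu,\mu}$ itself need not be re-derived from the integral form; it is inherited from the general ITCM identities (\ref{Inter}), since with $F_A=H_\nu$, $F_B=H_\mu$, $g(t)=-t^2$ one has $H_\nu B_\nu=-t^2H_\nu$ and $H_\mu B_\mu=-t^2H_\mu$, the multiplier $\varphi(t)=Ct^\alpha$ commutes with $-t^2$, and then (\ref{Comp}) gives (\ref{Inter}) by direct substitution.

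I expect the main obstacle to be the rigorous justification of the Fubini interchange together with the conditional (non-absolute) convergence at infinity: the multiplier $\xi^\alpha$ is not integrable against the Hankel kernels in the naive sense, so the double integral must be read as an iterated/regularised limit (for instance by inserting a factor $e^{-\varepsilon\xi}$ or a cutoff and letting $\varepsilon\to0$, or by interpreting $H_\nu$ on $L^2$ and treating $\xi^\alpha H_\nu[f]$ as a tempered object), and the two parameter inequalities are exactly what make this limiting procedure valid. The gamma-function simplification is routine but error-prone, and would be the second thing to watch.
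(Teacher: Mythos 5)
Your proposal follows essentially the same route as the paper's own proof: expand the composition $H_\mu^{-1}\,t^\alpha H_\nu$ into an iterated integral, interchange the order of integration, and evaluate the inner integral $\int_0^\infty t^{\alpha+1+\frac{\mu-\nu}{2}}J_{\frac{\mu-1}{2}}(xt)J_{\frac{\nu-1}{2}}(yt)\,dt$ by the discontinuous Weber--Schafheitlin formula (the paper cites it as formula 2.12.31.1 of Prudnikov--Brychkov--Marichev, with the same two convergence conditions you identify), whose two branches produce the $\int_0^x$ and $\int_x^\infty$ pieces of (\ref{Theo1}). Your added remarks on regularising the Fubini step and on inheriting the intertwining property from the formal ITCM identities are sound and, if anything, more careful than the paper, which performs the interchange formally.
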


	\begin{proof} We have
		$$
		\left( T^{(\alpha)}_{\nu,\, \mu} f\right) (x) =C\,\cdot	H_{\mu}^{-1} \left[t^{\alpha} H_{\nu}[f](t)\right](x)=
		$$
		$$
		=C\,\cdot\frac{2^{1-\mu}}{\Gamma^2\left(\frac{\mu+1}{2}\right)}
		\int\limits_{0}^{\infty} {j}_{\frac{\mu-1}{2}} (xt)\,
		t^{\mu+\alpha}\,dt \int\limits_{0}^{\infty }{j}_{\frac{\nu-1}{2}}(ty) f(y) y^\nu dy=
		$$	
		$$
		=C\,\cdot\frac{2^{\frac{\nu-\mu}{2}+2}\Gamma\left(\frac{\nu+1}{2}\right)}{\Gamma\left(\frac{\mu+1}{2}\right)} \int\limits_{0}^{\infty }(xt)^{\frac{1-\mu}{2}}J_{\frac{\mu-1}{2} }(xt) t^{\mu+\alpha}\, dt
		\int\limits_{0}^{\infty }(ty)^{\frac{1-\nu}{2}}J_{\frac{\nu-1}{2}}(ty) f(y)y^\nu dy=
		$$
		$$
		=C\,\cdot\frac{2^{\frac{\nu-\mu}{2}+2}\Gamma\left(\frac{\nu+1}{2}\right)}{\Gamma\left(\frac{\mu+1}{2}\right)} \,x^{\frac{1-\mu}{2}}\int\limits_{0}^{\infty }y^{\frac{\nu+1}{2}}f(y)dy
		\int\limits_{0}^{\infty } t^{\alpha+1+\frac{\mu-\nu}{2}} J_{\frac{\mu-1}{2} }(xt)J_{\frac{\nu-1}{2}}(ty)dt =
		$$
		$$
		=C\,\cdot\frac{2^{\frac{\nu-\mu}{2}+2}\Gamma\left(\frac{\nu+1}{2}\right)}{\Gamma\left(\frac{\mu+1}{2}\right)} \,x^{\frac{1-\mu}{2}}\int\limits_{0}^{x}y^{\frac{\nu+1}{2}}f(y)dy
		\int\limits_{0}^{\infty } t^{\alpha+1+\frac{\mu-\nu}{2}} J_{\frac{\mu-1}{2} }(xt)J_{\frac{\nu-1}{2}}(ty)dt+
		$$
		$$
		+C\,\cdot\frac{2^{\frac{\nu-\mu}{2}+2}\Gamma\left(\frac{\nu+1}{2}\right)}{\Gamma\left(\frac{\mu+1}{2}\right)} \,x^{\frac{1-\mu}{2}}\int\limits_{x}^{\infty }y^{\frac{\nu+1}{2}}f(y)dy
		\int\limits_{0}^{\infty } t^{\alpha+1+\frac{\mu-\nu}{2}} J_{\frac{\mu-1}{2} }(xt)J_{\frac{\nu-1}{2}}(ty)dt.
		$$

		Using formula 2.12.31.1 from \cite{IR2} p. 209 of the form
		$$
		\int\limits_{0}^{\infty } t^{\beta-1}\, J_{\rho }(xt)J_{\gamma }(yt)\, dt=
		$$		
		$$
		=\left\{%
		\begin{array}{ll}
		$$2^{\beta-1}x^{-\gamma-\beta}y^\gamma
		\frac{\Gamma\left(\frac{\gamma+\rho+\beta}{2}\right)}{\Gamma(\gamma+1)
			\Gamma\left(\frac{\rho-\gamma-\beta}{2}+1\right)}
		{_2F_1}\left( \frac{\gamma+\rho+\beta}{2}, \frac{\gamma-\rho+\beta}{2}; \gamma+1; \frac{y^2}{x^2}\right) $$, & \hbox{$0<y<x$;} \\
		$$2^{\beta-1}x^{\rho}y^{-\rho-\beta}
		\frac{\Gamma\left(\frac{\gamma+\rho+\beta}{2}\right)}{\Gamma(\rho+1)
			\Gamma\left(\frac{\gamma-\rho-\beta}{2}+1\right)}
		{_2F_1}\left( \frac{\gamma+\rho+\beta}{2}, \frac{\beta+\rho-\gamma}{2}; \rho+1; \frac{x^2}{y^2}\right)
		$$, & \hbox{$0<x<y,$} \\
		\end{array}%
		\right.
		$$
		$$
		x,y, {\rm Re}\,(\beta+\rho+\gamma)>0;\,\, {\rm Re}\,\beta<2
		$$
		and putting $\beta=\alpha+\frac{\mu-\nu}{2}+2$, $\rho=\frac{\mu-1}{2}$, $\gamma=\frac{\nu-1}{2}$ we obtain (\ref{Theo1}).
		$$
		\int\limits_{0}^{\infty } t^{\alpha+1+\frac{\mu-\nu}{2}} J_{\frac{\mu-1}{2} }(xt)J_{\frac{\nu-1}{2}}(ty)dt=
		$$
		$$
		=\left\{%
		\begin{array}{ll}
		$$\frac{2^{\alpha+1+\frac{\mu-\nu}{2}}y^{\frac{\nu-1}{2}}}{x^{\alpha+2-\frac{1-\mu}{2}}}
		\frac{\Gamma\left(\frac{\alpha+\mu+1}{2}\right)}{\Gamma\left( \frac{\nu+1}{2}\right)
			\Gamma\left(-\frac{\alpha}{2}\right)}
		{_2F_1}\left( \frac{\alpha+\mu+1}{2}, \frac{\alpha}{2}+1; \frac{\nu+1}{2}; \frac{y^2}{x^2}\right) $$, & \hbox{$0<y<x$;} \\
		$$\frac{2^{\alpha+1+\frac{\mu-\nu}{2}}x^{\frac{\mu-1}{2}}}{y^{\mu+\alpha-\frac{\nu-3}{2}}}
		\frac{\Gamma\left(\frac{\alpha+\mu+1}{2}\right)}{\Gamma\left(\frac{\mu+1}{2}\right)
			\Gamma\left(\frac{\nu-\mu-\alpha}{2}\right)}
		{_2F_1}\left( \frac{\alpha+\mu+1}{2}, \frac{\alpha+\mu-\nu}{2}+1;  \frac{\mu+1}{2}; \frac{x^2}{y^2}\right)
		$$, & \hbox{$0<x<y,$} \\
		\end{array}%
		\right.
		$$
		$$
		{\rm Re}\,(\alpha+\mu+1)>0;\qquad {\rm Re}\,\left(\alpha+\frac{\mu-\nu}{2} \right)<0
		$$
		and
		$$
		\left( T^{(\alpha)}_{\nu,\, \mu} f\right)(x) =
		$$
		$$
		=C\,\cdot\frac{2^{\alpha+3}\Gamma\left(\frac{\alpha+\mu+1}{2}\right)}{\Gamma\left(-\frac{\alpha}{2}\right)\Gamma\left(\frac{\mu+1}{2}\right)} \,x^{-1-\mu-\alpha}\int\limits_{0}^{x}f(y)
		{_2F_1}\left( \frac{\alpha+\mu+1}{2}, \frac{\alpha}{2}+1; \frac{\nu+1}{2}; \frac{y^2}{x^2}\right)y^\nu dy+
		$$
		$$
		+C\,\cdot\frac{2^{\alpha+3}\Gamma\left(\frac{\nu+1}{2}\right)
			\Gamma\left(\frac{\alpha+\mu+1}{2}\right)}{\Gamma^2\left(\frac{\mu+1}{2}\right)\Gamma\left(\frac{\nu-\mu-\alpha}{2}\right)}
		\,\int\limits_{x}^{\infty }f(y)
		{_2F_1}\left( \frac{\alpha+\mu+1}{2}, \frac{\alpha+\mu-\nu}{2}+1;  \frac{\mu+1}{2}; \frac{x^2}{y^2}\right)\times$$
		$$\times y^{\nu-\mu-\alpha-1}dy.
		$$	
		This completes the proof.
	\end{proof}


Constant $C$ in (\ref{Theo1}) should be chosen based on convenience.
Very often it is reasonable to choose this constant so that $T^{(\alpha)}_{\nu,\, \mu}\, 1=1 $.

 	Using formula
\begin{equation}
\label{Hyppo}
	_2F_1(a,b;b;z)=(1-z)^{-a}
\end{equation}
	we  give several useful transmutation operators that are special cases of
	operator (\ref{Theo1}). In section \ref{APPL} we will use these operators to find the solutions to the perturbed wave equations.

\begin{corollary}
	Let  $f\in L^2(0, \infty)$, $\alpha=-\mu$; $\nu=0$. In this case for $\mu>0$ we obtain
	the operator
	\begin{equation}
	\label{Poisson1}
	\left( T^{(-\mu)}_{0,\, \mu} f\right)(x)=\frac{2\Gamma\left(\frac{\mu+1}{2}\right)}{\sqrt{\pi}
		\Gamma\left(\frac{\mu}{2}\right)}\,x^{1-\mu}\,\int\limits_{0}^{x}f(y)
	(x^2- y^2)^{\frac{\mu}{2}-1}dy,
	\end{equation}
	such that
	\begin{equation}\label{4499}
	T_{0, \, \mu}^{(-\mu)} D^2 = B_{\mu} T_{0, \, \mu}^{(-\mu)}
	\end{equation}
	and $T^{(-\mu)}_{0,\, \mu}\,1=1$,
\end{corollary}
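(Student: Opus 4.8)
The plan is to obtain the Corollary entirely by specializing the integral representation (\ref{Theo1}) of the preceding theorem, so no new integral evaluation is required; the work lies in checking that each factor collapses correctly. First I would verify that the hypotheses of the theorem are met at $\alpha=-\mu$, $\nu=0$: one has ${\rm Re}(\alpha+\mu+1)={\rm Re}(1)=1>0$ and ${\rm Re}\bigl(\alpha+\tfrac{\mu-\nu}{2}\bigr)={\rm Re}\bigl(-\tfrac{\mu}{2}\bigr)=-\tfrac{\mu}{2}<0$ for $\mu>0$, so the representation is applicable and may be specialized.

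Next I would dispose of the second (outer) integral in (\ref{Theo1}). Its prefactor carries a factor $1/\Gamma\bigl(\tfrac{\nu-\mu-\alpha}{2}\bigr)$, and with $\nu=0$, $\alpha=-\mu$ the argument becomes $\tfrac{0-\mu+\mu}{2}=0$; since $\Gamma$ has a pole at $0$ we have $1/\Gamma(0)=0$, so the entire $\int_x^\infty$ term drops out and only the inner integral survives. I would then simplify the inner integral. Its hypergeometric factor becomes ${}_2F_1\bigl(\tfrac12,1-\tfrac{\mu}{2};\tfrac12;\tfrac{y^2}{x^2}\bigr)$, and the delicate point here is that the \emph{first} and third parameters coincide rather than the second and third; so I first apply the symmetry ${}_2F_1(a,b;c;z)={}_2F_1(b,a;c;z)$ and only then invoke (\ref{Hyppo}) to obtain $\bigl(1-\tfrac{y^2}{x^2}\bigr)^{-(1-\mu/2)}=\bigl(1-\tfrac{y^2}{x^2}\bigr)^{\mu/2-1}$.

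Writing this last expression as $x^{2-\mu}(x^2-y^2)^{\mu/2-1}$ and using $y^\nu=1$, $x^{-1-\mu-\alpha}=x^{-1}$, $\Gamma\bigl(\tfrac{\alpha+\mu+1}{2}\bigr)=\Gamma(\tfrac12)=\sqrt\pi$ and $\Gamma\bigl(-\tfrac{\alpha}{2}\bigr)=\Gamma(\tfrac{\mu}{2})$, the operator reduces to $C\cdot\frac{2^{3-\mu}\sqrt\pi}{\Gamma(\frac{\mu+1}{2})\Gamma(\frac{\mu}{2})}\,x^{1-\mu}\int_0^x f(y)(x^2-y^2)^{\mu/2-1}\,dy$. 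Finally I would fix the constant $C$ through the normalization $T^{(-\mu)}_{0,\mu}1=1$: evaluating $\int_0^x(x^2-y^2)^{\mu/2-1}\,dy$ by the substitution $y=x\sin\theta$ gives $x^{\mu-1}\frac{\sqrt\pi\,\Gamma(\mu/2)}{2\,\Gamma((\mu+1)/2)}$ from the standard beta integral, and matching the resulting value to $1$ forces $C=2^{\mu-2}\Gamma^2\bigl(\tfrac{\mu+1}{2}\bigr)/\pi$, which is precisely the value that turns the prefactor into $\frac{2\,\Gamma((\mu+1)/2)}{\sqrt\pi\,\Gamma(\mu/2)}$ as asserted in (\ref{Poisson1}).

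The intertwining (\ref{4499}) is then immediate: putting $\nu=0$ in the theorem makes $B_0=\frac{\partial^2}{\partial y^2}=D^2$, so $T^{(-\mu)}_{0,\mu}D^2=B_\mu T^{(-\mu)}_{0,\mu}$ follows at once from $T^{(\alpha)}_{0,\mu}B_0=B_\mu T^{(\alpha)}_{0,\mu}$. I expect the only genuinely careful step to be the hypergeometric reduction, since one must exploit the parameter symmetry of ${}_2F_1$ \emph{before} applying (\ref{Hyppo}); the vanishing of the outer term via $1/\Gamma(0)=0$ and the determination of $C$ are then routine bookkeeping with Gamma values.
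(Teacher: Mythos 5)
Your proposal is correct and follows essentially the same route as the paper: specialize (\ref{Theo1}) at $\alpha=-\mu$, $\nu=0$, reduce the hypergeometric factor via (\ref{Hyppo}) to $(x^2-y^2)^{\mu/2-1}$, and fix $C=2^{\mu-2}\Gamma^2\bigl(\tfrac{\mu+1}{2}\bigr)/\pi$ by the beta-integral normalization $T^{(-\mu)}_{0,\mu}1=1$. In fact you make explicit two points the paper passes over silently --- the vanishing of the $\int_x^\infty$ term because $1/\Gamma(0)=0$, and the use of the symmetry ${}_2F_1(a,b;c;z)={}_2F_1(b,a;c;z)$ before invoking (\ref{Hyppo}) --- so your write-up is, if anything, more complete.
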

\begin{proof} We have
		$$
		\left( T^{(-\mu)}_{0,\, \mu} f\right)(x)
		=C\,\cdot\frac{2^{3-\mu}\sqrt{\pi}}{x\Gamma\left(\frac{\mu}{2}\right)
			\Gamma\left(\frac{\mu+1}{2}\right)}
	\int\limits_{0}^{x}f(y)
		{_2F_1}\left( \frac{1}{2}, 1-\frac{\mu}{2}; \frac{1}{2}; \frac{y^2}{x^2}\right) dy.
		$$
	 Using formula (\ref{Hyppo})
	we get
$$
{_2F_1}\left( \frac{1}{2}, 1-\frac{\mu}{2}; \frac{1}{2}; \frac{y^2}{x^2}\right)=\left(1- \frac{y^2}{x^2}\right)^{\frac{\mu}{2}-1}=x^{2-\mu}(x^2-y^2)^{\frac{\mu}{2}-1}
$$	
and
$$
\left( T^{(-\mu)}_{0,\, \mu} f\right)(x)
=C\,\cdot\frac{x^{1-\mu}2^{3-\mu}\sqrt{\pi}}{\Gamma\left(\frac{\mu}{2}\right)
	\Gamma\left(\frac{\mu+1}{2}\right)}
\int\limits_{0}^{x}f(y)
(x^2-y^2)^{\frac{\mu}{2}-1}dy.
$$
It is easy to see that
$$
x^{1-\mu}\int\limits_{0}^{x}
(x^2-y^2)^{\frac{\mu}{2}-1}dy=\{y=xz\}=\int\limits_{0}^{1}
(1-z^2)^{\frac{\mu}{2}-1}dz=
$$
$$
=\{z^2=t\}=\frac{1}{2}\int\limits_{0}^{1}
(1-t)^{\frac{\mu}{2}-1}t^{-\frac{1}{2}}dt
= \frac{\sqrt{\pi}
	\Gamma\left(\frac{\mu}{2}\right)}{2\Gamma\left(\frac{\mu+1}{2}\right)}
$$
and taking $C=\frac{\Gamma^2\left(\frac{\mu+1}{2}\right)}{2^{2-\mu}\pi}$ we get
$T^{(-\mu)}_{0,\, \mu}1 =1$.
And this completes the proof.
 \end{proof}

The operator (\ref{Poisson1}) is the well--known {\it Poisson operator} (see \cite{Levitan1}). We will use conventional  symbol $\mathcal{P}^\mu_x$ for it:
	\begin{equation}
	\label{Poisson}
\mathcal{P}^\mu_x f(x)=C(\mu)x^{1-\mu}\,\int\limits_{0}^{x}f(y)
	(x^2- y^2)^{\frac{\mu}{2}-1}dy,
	\end{equation}
	$$
	\mathcal{P}^\mu_x 1=1,  \qquad C(\mu)=\frac{2\Gamma\left(\frac{\mu+1}{2}\right)}{\sqrt{\pi}
		\Gamma\left(\frac{\mu}{2}\right)}.
	$$

{\bf Remark}. It is easy to see that
if  $u=u(x,t)$, $x,t\in \mathbb{R}$ and
$$
u(x,0)=f(x),\qquad u_t(x,0)=0
$$
then
\begin{equation}\label{InC}
	\mathcal{P}^\mu_t u(x,t)|_{t=0}=f(x),\qquad \frac{\partial}{\partial t} \mathcal{P}^\mu_t u(x,t)\biggr|_{t=0}=0.
\end{equation}
Indeed, we have
$$
	\mathcal{P}^\mu_t u(x,t)|_{t=0}=C(\mu)t^{1-\mu}\,\int\limits_{0}^{t}u(x,y)
(t^2- y^2)^{\frac{\mu}{2}-1}dy\biggr|_{t=0}=
$$
$$
=C(\mu)\,\int\limits_{0}^{1}u(x,ty)|_{t=0}
(1- y^2)^{\frac{\mu}{2}-1}dy=f(x)
$$
and
$$
 \frac{\partial}{\partial t} \mathcal{P}^\mu_t u(x,t)\biggr|_{t=0}
=C(\mu)\,\int\limits_{0}^{1}u_t(x,ty)|_{t=0}
(1- y^2)^{\frac{\mu}{2}-1}dy=0.
$$

\begin{corollary} 	For  $f\in L^2(0, \infty)$, $\alpha{=}\nu{-}\mu$; $-1{<} {\rm Re}\, \nu {<} {\rm Re}\, \mu$ we obtain the first "descent"\, operator
		\begin{equation}\label{OPDBess}
		\left( T^{(\nu-\mu)}_{\nu,\, \mu} f\right)(x) =\frac{2\,\Gamma\left(\frac{\mu+1}{2}\right)}{\Gamma\left(\frac{\mu{-}\nu}{2}\right)
			\Gamma\left(\frac{\nu+1}{2}\right)}\,x^{1-\mu} \int\limits_{0}^{x}f(y)
		(x^2-y^2)^{\frac{\mu-\nu}{2}-1}y^\nu dy.
		\end{equation}
	such that
		$$
	T^{(\nu-\mu)}_{\nu,\, \mu}  B_{\nu} = B_{\mu} T^{(\nu-\mu)}_{\nu,\, \mu}
		$$
	 and
$$T^{(\nu-\mu)}_{\nu,\, \mu}1 =1.$$	
\end{corollary}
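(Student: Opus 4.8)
The plan is to specialize the general representation \eqref{Theo1} to $\alpha=\nu-\mu$ and then argue exactly as in the proof of the preceding Poisson-operator corollary: reduce the hypergeometric factor through \eqref{Hyppo} and fix the constant $C$ from the normalization $T^{(\nu-\mu)}_{\nu,\mu}1=1$. First I would check that the hypotheses are consistent. Inserting $\alpha=\nu-\mu$ into the Theorem's constraints $\operatorname{Re}(\alpha+\mu+1)>0$ and $\operatorname{Re}(\alpha+\tfrac{\mu-\nu}{2})<0$ gives $\operatorname{Re}(\nu+1)>0$ and $\operatorname{Re}(\tfrac{\nu-\mu}{2})<0$, i.e.\ precisely $-1<\operatorname{Re}\nu<\operatorname{Re}\mu$, so \eqref{Theo1} is available under the stated assumptions.

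The conceptual heart of the statement is that the second (tail) integral in \eqref{Theo1} disappears, leaving an operator supported on $(0,x)$ --- this is what makes it a genuine ``descent''. Its coefficient contains the factor $1/\Gamma\bigl(\tfrac{\nu-\mu-\alpha}{2}\bigr)$, and at $\alpha=\nu-\mu$ the argument is $0$, so $1/\Gamma(0)=0$ annihilates that term entirely. Under the strict inequalities above all remaining Gamma factors stay finite, so this vanishing is clean and requires no delicate limiting argument; I would flag it as the key step.

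For the surviving integral over $(0,x)$ I would collapse the Gauss function. At $\alpha=\nu-\mu$ its parameters become $\tfrac{\nu+1}{2},\ \tfrac{\nu-\mu}{2}+1;\ \tfrac{\nu+1}{2}$, so the first and third coincide; using the symmetry of ${}_2F_1$ in its first two arguments together with \eqref{Hyppo} turns it into $(1-y^2/x^2)^{\frac{\mu-\nu}{2}-1}=x^{2-\mu+\nu}(x^2-y^2)^{\frac{\mu-\nu}{2}-1}$. Combining the prefactor $x^{-1-\mu-\alpha}=x^{-1-\nu}$ with this $x^{2-\mu+\nu}$ yields $x^{1-\mu}$, and collecting the Gamma factors produces the kernel $\frac{2\Gamma(\frac{\mu+1}{2})}{\Gamma(\frac{\mu-\nu}{2})\Gamma(\frac{\nu+1}{2})}\,x^{1-\mu}\int_0^x f(y)(x^2-y^2)^{\frac{\mu-\nu}{2}-1}y^\nu\,dy$ once $C$ is chosen correctly.

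Finally I would fix $C$ by the normalization. Substituting $y=xz$ and then $z^2=t$ reduces $x^{1-\mu}\int_0^x(x^2-y^2)^{\frac{\mu-\nu}{2}-1}y^\nu\,dy$ to $\tfrac12\,\Gamma(\tfrac{\nu+1}{2})\Gamma(\tfrac{\mu-\nu}{2})/\Gamma(\tfrac{\mu+1}{2})$, and demanding that its product with the coefficient equal $1$ determines $C$ --- the same Beta-type computation as in the Poisson case. The intertwining relation $T^{(\nu-\mu)}_{\nu,\mu}B_\nu=B_\mu T^{(\nu-\mu)}_{\nu,\mu}$ needs no separate proof, being the $\alpha=\nu-\mu$ instance of the identity already guaranteed by the Theorem's ITCM construction. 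I anticipate no obstacle beyond this bookkeeping, the only point deserving care being the vanishing of the tail term discussed above.
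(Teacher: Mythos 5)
Your proposal is correct and follows essentially the same route as the paper: substitute $\alpha=\nu-\mu$ into \eqref{Theo1}, collapse the ${}_2F_1$ via \eqref{Hyppo} (after using its symmetry in the first two parameters), and fix $C$ by the same Beta-function normalization computation. You actually make explicit two details the paper passes over silently --- that the tail integral dies because of the factor $1/\Gamma\left(\frac{\nu-\mu-\alpha}{2}\right)=1/\Gamma(0)=0$, and that the theorem's hypotheses translate exactly into $-1<\operatorname{Re}\nu<\operatorname{Re}\mu$ --- which strengthens rather than changes the argument.
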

\begin{proof} Substituting the value $\alpha=\nu-\mu$  into  (\ref{Theo1})
	 we can write
$$
\left( T^{(\nu-\mu)}_{\nu,\, \mu} f\right)(x) =C\,\cdot\frac{2^{\nu-\mu+3}\Gamma\left(\frac{\nu+1}{2}\right)}
{\Gamma\left(\frac{\mu-\nu}{2}\right)\Gamma\left(\frac{\mu+1}{2}\right)} \,x^{-1-\nu} \times
$$
$$
\times\int\limits_{0}^{x}f(y)
{_2F_1}\left( \frac{\nu+1}{2}, \frac{\nu-\mu}{2}+1; \frac{\nu+1}{2}; \frac{y^2}{x^2}\right)y^\nu dy.
$$
Taking  into account the identity (\ref{Hyppo}) for a hypergeometric function the last equality reduces to
$x^{\nu-\mu+2}(x^2-y^2)^{\frac{\mu-\nu}{2}-1}$ and the operator $ T^{(\nu-\mu)}_{\nu,\, \mu}$ is written in the form
$$
\left( T^{(\nu-\mu)}_{\nu,\, \mu} f\right)(x) =C\,\cdot\frac{2^{\nu-\mu+3}\Gamma\left(\frac{\nu+1}{2}\right)}
{\Gamma\left(\frac{\mu-\nu}{2}\right)\Gamma\left(\frac{\mu+1}{2}\right)}\,x^{1-\mu} \int\limits_{0}^{x}f(y)
(x^2-y^2)^{\frac{\mu-\nu}{2}-1}y^\nu dy.
$$
Clearly that
$$
x^{1-\mu} \int\limits_{0}^{x}
(x^2-y^2)^{\frac{\mu-\nu}{2}-1}y^\nu dy=\{y=xz\}=\int\limits_{0}^{1}
(1-z^2)^{\frac{\mu-\nu}{2}-1}z^\nu dz=
$$
$$
=\{z^2=t\}=\frac{1}{2}\int\limits_{0}^{1}
(1-t)^{\frac{\mu-\nu}{2}-1}t^{\frac{\nu-1}{2}}dt
= \frac{\Gamma\left(\frac{\mu-\nu}{2}\right)\Gamma\left(\frac{\nu+1}{2}\right)}{2\Gamma\left(\frac{\mu+1}{2}\right)}
$$
and taking $C=\frac{2^{\mu-\nu-2}\Gamma^2\left(\frac{\mu+1}{2}\right)}{\Gamma^2\left(\frac{\nu+1}{2}\right)}$ we get
$T^{(\nu-\mu)}_{\nu,\, \mu}1 =1$.
It completes the proof.

\end{proof}

\begin{corollary}
	Let  $f\in L^2(0, \infty)$, $\alpha=0$; $-1{<}{\rm Re}\, \mu {<} {\rm Re}\, \nu$. In this case we obtain
	the second "descent"\, operator:
	\begin{equation}
	\label{desent}
	\left( T^{(0)}_{\nu,\, \mu} f\right)(x) =\frac{2\Gamma\left(\nu-\mu\right)}
	{\Gamma^2\left(\frac{\nu-\mu}{2}\right)}
	\,\int\limits_{x}^{\infty }f(y)
	(y^2-x^2)^{\frac{\nu-\mu}{2}-1}ydy.
	\end{equation}
\end{corollary}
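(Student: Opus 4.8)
The plan is to read this off directly from the general formula (\ref{Theo1}) of Theorem 3.1 by specializing $\alpha=0$. First I would verify that the hypotheses of Theorem 3.1 are met under $-1<{\rm Re}\,\mu<{\rm Re}\,\nu$: with $\alpha=0$ the conditions ${\rm Re}\,(\alpha+\mu+1)>0$ and ${\rm Re}\,(\alpha+\frac{\mu-\nu}{2})<0$ reduce to ${\rm Re}\,\mu>-1$ and ${\rm Re}\,\mu<{\rm Re}\,\nu$, which are exactly the assumed restrictions, so (\ref{Theo1}) applies.

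The crucial observation is that the first integral in (\ref{Theo1}), the one over $(0,x)$, carries the explicit prefactor $1/\Gamma(-\alpha/2)$. Setting $\alpha=0$ gives $1/\Gamma(0)=0$, while the remaining pieces of that term (the Gamma quotient, the power $x^{-1-\mu-\alpha}$, the Gauss function ${_2F_1}(\frac{\mu+1}{2},1;\frac{\nu+1}{2};y^2/x^2)$ on $0<y<x$, and the integral of $f$ against $y^\nu$) all stay finite on the admissible range. Hence the $(0,x)$ term drops out and only the integral over $(x,\infty)$ survives.

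Next I would simplify the Gauss function in the surviving term. With $\alpha=0$ it is ${_2F_1}(\frac{\mu+1}{2},\frac{\mu-\nu}{2}+1;\frac{\mu+1}{2};x^2/y^2)$, whose first and third parameters coincide. Applying the symmetry of ${_2F_1}$ in its first two arguments and then the reduction formula (\ref{Hyppo}) collapses it to $(1-x^2/y^2)^{-(\frac{\mu-\nu}{2}+1)}=(y^2-x^2)^{\frac{\nu-\mu}{2}-1}y^{\mu-\nu+2}$. Multiplying this power of $y$ by the factor $y^{\nu-\mu-1}$ already present in the integrand leaves exactly $y$, so the integral becomes $\int_x^\infty f(y)(y^2-x^2)^{\frac{\nu-\mu}{2}-1}y\,dy$, which is the kernel claimed in (\ref{desent}).

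It then remains to collect constants: after $\alpha=0$ the prefactor of the second term of (\ref{Theo1}) is $8C\,\Gamma(\frac{\nu+1}{2})/[\Gamma(\frac{\mu+1}{2})\Gamma(\frac{\nu-\mu}{2})]$, and choosing $C$ so that this equals $2\Gamma(\nu-\mu)/\Gamma^2(\frac{\nu-\mu}{2})$ — which is possible via the Legendre duplication formula for $\Gamma(\nu-\mu)$ — yields (\ref{desent}); the intertwining $T^{(0)}_{\nu,\mu}B_\nu=B_\mu T^{(0)}_{\nu,\mu}$ is inherited from Theorem 3.1. I expect the only genuinely delicate point to be the justification that the $(0,x)$ term truly disappears, i.e. that it is a bona fide $0\cdot(\text{finite})$ rather than an indeterminate $0\cdot\infty$: one must confirm convergence of that integral for $f\in L^2(0,\infty)$ when $-1<{\rm Re}\,\mu<{\rm Re}\,\nu$. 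Everything else is a routine specialization and the duplication-formula bookkeeping for the constant.
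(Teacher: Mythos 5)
Your proposal is correct and follows essentially the same route as the paper's proof: specialize (\ref{Theo1}) at $\alpha=0$, discard the $(0,x)$ term through the factor $1/\Gamma(-\alpha/2)=1/\Gamma(0)=0$, collapse the surviving ${}_2F_1$ via its symmetry in the first two parameters together with (\ref{Hyppo}), and fix the free constant $C$ --- the paper lands on the same $C=\frac{\Gamma\left(\frac{\mu+1}{2}\right)\Gamma(\nu-\mu)}{4\Gamma\left(\frac{\nu+1}{2}\right)\Gamma\left(\frac{\nu-\mu}{2}\right)}$, arrived at by normalizing $T^{(0)}_{\nu,\mu}1$ rather than by matching the target constant, and no duplication formula is actually needed since $C$ is a free parameter. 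The one point you flag as delicate (that the dropped term is genuinely $0\cdot(\text{finite})$) is indeed passed over silently in the paper, so your version is, if anything, slightly more careful.
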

\begin{proof} We have
			$$
			\left( T^{(0)}_{\nu,\, \mu} f\right)(x) =$$
			$$=
		C\,\cdot\frac{2^{3}\Gamma\left(\frac{\nu+1}{2}\right)
			}{\Gamma\left(\frac{\mu+1}{2}\right)\Gamma\left(\frac{\nu-\mu}{2}\right)}
			\,\int\limits_{x}^{\infty }f(y)
			{_2F_1}\left( \frac{\mu+1}{2}, \frac{\mu-\nu}{2}+1;  \frac{\mu+1}{2}; \frac{x^2}{y^2}\right) y^{\nu-\mu-1}dy.
			$$
		Using formula (\ref{Hyppo})
	we get
		$$
		{_2F_1}\left( \frac{\mu+1}{2}, \frac{\mu-\nu}{2}+1;  \frac{\mu+1}{2}; \frac{x^2}{y^2}\right)
		=\left( 1-\frac{x^2}{y^2}\right)^{\frac{\nu-\mu}{2}-1}=y^{2+\mu-\nu}(y^2-x^2)^{\frac{\nu-\mu}{2}-1}
	$$
and	
$$
\left( T^{(0)}_{\nu,\, \mu} f\right)(x) =
C\,\cdot\frac{2^{3}\Gamma\left(\frac{\nu+1}{2}\right)
}{\Gamma\left(\frac{\mu+1}{2}\right)\Gamma\left(\frac{\nu-\mu}{2}\right)}
\,\int\limits_{x}^{\infty }f(y)
(y^2-x^2)^{\frac{\nu-\mu}{2}-1}ydy.
$$

It is obvious that
$$
\int\limits_{x}^{\infty }(y^2-x^2)^{\frac{\nu-\mu}{2}-1}ydy=\left\{y=\frac{x}{z}\right\}=
$$
$$
=x^{\nu-\mu}\int\limits_{0}^{1}(1-z^2)^{\frac{\nu-\mu}{2}-1}z^{\mu-\nu-1}dz=\{z^2=t\}=
$$
$$
=\frac{x^{\nu-\mu}}{2}\int\limits_{0}^{1}(1-t)^{\frac{\nu-\mu}{2}-1}t^{\frac{\nu-\mu}{2}-1}dt=
\frac{x^{\nu-\mu}\Gamma^2\left(\frac{\nu-\mu}{2}\right)}{2\Gamma(\nu-\mu) }.
$$
Therefore, for  $C=\frac{\Gamma\left(\frac{\mu+1}{2}\right)\Gamma(\nu-\mu)}{4\Gamma\left(\frac{\nu+1}{2}\right)\Gamma\left(\frac{\nu-\mu}{2}\right)}
$ we get
$T^{(\nu-\mu)}_{\nu,\, \mu}1 =x^{\nu-\mu}$.
It completes the proof.
\end{proof}

In \cite{Sit3} the formula (\ref{desent}) was obtained as a particular case of Buschman--Erdelyi operator of the third kind but with different constant:
\begin{equation}
\label{desent1}
\left( T^{(0)}_{\nu,\, \mu} f\right)(x) =\frac{2^{1-\frac{\nu-\mu}{2}}
}{\Gamma\left(\frac{\nu-\mu}{2}\right)}
\,\int\limits_{x}^{\infty }f(y)y
\left(y^2- x^2\right)^{\frac{\nu-\mu}{2}-1}dy.
\end{equation}
As might be seen in the form (\ref{desent}) as well as (\ref{desent1})  the operator $T^{(0)}_{\nu,\, \mu}$ does not depend on the values $\nu$ and $\mu$ but only on the difference between $\nu$ and $\mu$.

\begin{corollary} Let  $f\in L^2(0, \infty)$, $
	{\rm Re}\,(\alpha+\nu+1)>0;\,\, {\rm Re}\,\alpha<0$. If we take $\mu=\nu$ in (\ref{Theo1})
 we obtain the operator
	$$
	\left( T^{(\alpha)}_{\nu,\, \nu} f\right)(x) =\frac{2^{\alpha+3}\Gamma\left(\frac{\alpha+\nu+1}{2}\right)}{\Gamma\left(-\frac{\alpha}{2}\right)\Gamma\left(\frac{\nu+1}{2}\right)}\times
	$$
	$$
	\times\left[x^{-1-\nu-\alpha} \,\int\limits_{0}^{x}f(y)
	{_2F_1}\left( \frac{\alpha+\nu+1}{2}, \frac{\alpha}{2}+1; \frac{\nu+1}{2}; \frac{y^2}{x^2}\right)y^\nu dy+\right.
	$$
	\begin{equation}\label{TheoFr}
	\left.	+
	\,\int\limits_{x}^{\infty }f(y)		{_2F_1}\left( \frac{\alpha+\nu+1}{2}, \frac{\alpha}{2}+1;  \frac{\nu+1}{2}; \frac{x^2}{y^2}\right)y^{-\alpha-1}dy\right]
	\end{equation}
	which is an explicit integral representation of the negative fractional power $\alpha$ of the Bessel operator: $B^\alpha_\nu$.
\end{corollary}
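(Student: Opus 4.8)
The plan is to obtain this operator by direct specialization of the master formula (\ref{Theo1}) rather than by recomputing the underlying Bessel integral. I would set the free constant $C=1$ and substitute $\mu=\nu$ everywhere in (\ref{Theo1}), then collect the gamma factors. The outer prefactor $C\cdot\frac{2^{\alpha+3}\Gamma\left(\frac{\alpha+\mu+1}{2}\right)}{\Gamma\left(\frac{\mu+1}{2}\right)}$ becomes $\frac{2^{\alpha+3}\Gamma\left(\frac{\alpha+\nu+1}{2}\right)}{\Gamma\left(\frac{\nu+1}{2}\right)}$, while the inner coefficient of the second integral, $\frac{\Gamma\left(\frac{\nu+1}{2}\right)}{\Gamma\left(\frac{\mu+1}{2}\right)\Gamma\left(\frac{\nu-\mu-\alpha}{2}\right)}$, collapses to $\frac{1}{\Gamma\left(-\frac{\alpha}{2}\right)}$ once $\mu=\nu$ is inserted, because the first two gamma factors cancel and $\frac{\nu-\mu-\alpha}{2}=-\frac{\alpha}{2}$. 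This is the decisive simplification: the factor $\frac{1}{\Gamma\left(-\frac{\alpha}{2}\right)}$ now multiplies both the near-diagonal and far-diagonal integrals, so it can be pulled outside the bracket to produce the common prefactor $\frac{2^{\alpha+3}\Gamma\left(\frac{\alpha+\nu+1}{2}\right)}{\Gamma\left(-\frac{\alpha}{2}\right)\Gamma\left(\frac{\nu+1}{2}\right)}$ displayed in (\ref{TheoFr}).

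Next I would track the hypergeometric parameters and powers. In the first integral nothing changes except $\mu\mapsto\nu$, giving the factor $x^{-1-\nu-\alpha}$ and the kernel ${}_2F_1\left(\frac{\alpha+\nu+1}{2},\frac{\alpha}{2}+1;\frac{\nu+1}{2};\frac{y^2}{x^2}\right)$. In the second integral the middle hypergeometric parameter $\frac{\alpha+\mu-\nu}{2}+1$ becomes $\frac{\alpha}{2}+1$ and the lower parameter $\frac{\mu+1}{2}$ becomes $\frac{\nu+1}{2}$, so the two hypergeometric functions acquire the identical parameter triple $\left(\frac{\alpha+\nu+1}{2},\frac{\alpha}{2}+1;\frac{\nu+1}{2}\right)$ and differ only through their arguments $\frac{y^2}{x^2}$ and $\frac{x^2}{y^2}$; simultaneously the weight $y^{\nu-\mu-\alpha-1}$ reduces to $y^{-\alpha-1}$. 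Assembling these pieces reproduces (\ref{TheoFr}) exactly.

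Finally I would verify that the hypotheses transfer correctly. The admissibility conditions of the theorem, ${\rm Re}\,(\alpha+\mu+1)>0$ and ${\rm Re}\left(\alpha+\frac{\mu-\nu}{2}\right)<0$, become precisely ${\rm Re}\,(\alpha+\nu+1)>0$ and ${\rm Re}\,\alpha<0$ upon setting $\mu=\nu$, matching the stated assumptions and guaranteeing convergence of the discontinuous Bessel integral used in the parent theorem. The identification with a fractional power of the Bessel operator is then read off from the ITCM factorization itself: for $\mu=\nu$ formula (\ref{4410}) reads $T^{(\alpha)}_{\nu,\nu}=H_\nu^{-1}\bigl(t^\alpha H_\nu\bigr)$, and since $H_\nu$ intertwines $B_\nu$ with multiplication by $-t^2$, composing with the multiplier $t^\alpha$ realizes a fractional power $B^\alpha_\nu$ as a Hankel multiplier, negative because ${\rm Re}\,\alpha<0$.

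I expect no genuine obstacle, since the argument is a substitution followed by gamma-function bookkeeping; the only point demanding care is confirming that the coefficient $\frac{1}{\Gamma\left(-\frac{\alpha}{2}\right)}$ really does become common to both integrals after setting $\mu=\nu$, as this is what lets the operator be written with a single scalar prefactor and makes the two hypergeometric kernels coincide in their parameters.
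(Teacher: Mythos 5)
Your proposal is correct and takes exactly the route the paper intends: the corollary is stated there without a separate proof, being a direct specialization $\mu=\nu$ of (\ref{Theo1}) with $C=1$, and your gamma-factor bookkeeping (the collapse of $\frac{\Gamma\left(\frac{\nu+1}{2}\right)}{\Gamma\left(\frac{\mu+1}{2}\right)\Gamma\left(\frac{\nu-\mu-\alpha}{2}\right)}$ to $\frac{1}{\Gamma\left(-\frac{\alpha}{2}\right)}$, the merging of the hypergeometric parameter triples, and the reduction of the hypotheses to ${\rm Re}\,(\alpha+\nu+1)>0$, ${\rm Re}\,\alpha<0$) checks out. Your closing remark identifying the operator as a fractional power via the Hankel multiplier $t^{\alpha}$ in (\ref{4410}) is likewise the paper's own (informal) justification of the claim.
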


So it is possible and easy to obtain fractional powers of the Bessel operator by ITCM. For different approaches to fractional powers of the Bessel operator and its explicit integral representations cf. \cite{McBArt,Ida,Dim,Kir1,Sita1,Sita2,Sit2,SitSh1,SitSh2}.


\begin{theorem} If  apply ITCM with $\varphi (t) = j_{\frac{\nu-1}{2}}(zt)$ in (\ref{4410}) and with $\mu=\nu$ then the operator
		 $$
	\left( T^{(\varphi)}_{\nu,\, \nu} f\right) (x) =\,^\nu T_x^zf(x)=H_{\nu}^{-1} \left[ j_{\frac{\nu-1}{2}}(z t) H_{\nu}[f](t)\right](x)=
		 $$
	\begin{equation}
	\label{Shift}
		=\frac{2^\nu\Gamma\left(\frac{\nu+1}{2} \right) }{\sqrt{\pi}(4xz)^{\nu-1}\Gamma\left(\frac{\nu}{2} \right) }\int\limits_{|x-z|}^{x+z}f(y) y [(z^2-(x-y)^2)((x+y)^2-z^2)]^{\frac{\nu}{2} -1} dy
	\end{equation}
		 coincides with the generalized translation operator (see \cite{Levitan1,Levitan2,Levitan3}), for which the next properties are valid
	\begin{equation}
	\label{Pro0}
		\,^\nu T_x^z (B_\nu)_x= (B_\nu)_z\,^\nu T_x^z,
		\end{equation}
		\begin{equation}
		\label{Pro}
		 \,^\nu T_x^zf(x)|_{z=0}=f(x),\qquad  \frac{\partial}{\partial z}\,^\nu T_x^zf(x)\biggr|_{z=0}=0.
		\end{equation}
\end{theorem}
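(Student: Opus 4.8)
The plan is to evaluate the composition defining ${}^\nu T_x^z$ in explicit form by the same scheme used in the first theorem of this section, the new feature being that the multiplier $\varphi(t)=j_{\frac{\nu-1}{2}}(zt)$ is itself a Bessel kernel depending on the parameter $z$. First I would insert the definitions \eqref{SK11} of $H_\nu$ and of $H_\nu^{-1}$ into the composition \eqref{4410}, obtaining
$$
\left({}^\nu T_x^z f\right)(x)=\frac{2^{1-\nu}}{\Gamma^2\left(\frac{\nu+1}{2}\right)}
\int\limits_0^\infty j_{\frac{\nu-1}{2}}(xt)\,j_{\frac{\nu-1}{2}}(zt)\,t^\nu\,dt
\int\limits_0^\infty j_{\frac{\nu-1}{2}}(ty)\,f(y)\,y^\nu\,dy ,
$$
and then interchange the order of integration (justified for $f$ in a suitable dense class such as $S$ by absolute convergence). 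This isolates the inner $t$-integral
$$
I(x,z,y)=\int\limits_0^\infty j_{\frac{\nu-1}{2}}(xt)\,j_{\frac{\nu-1}{2}}(zt)\,j_{\frac{\nu-1}{2}}(yt)\,t^\nu\,dt ,
$$
a single integral over a product of three normalized Bessel functions with parameters $x$, $z$ and the integration variable $y$.

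The core of the argument, and the step I expect to be the main obstacle, is the evaluation of $I(x,z,y)$. I would rewrite each $j_{\frac{\nu-1}{2}}$ in terms of the ordinary Bessel function $J_{\frac{\nu-1}{2}}$ via \eqref{HSM}; collecting powers of $t$ one checks that the exponent reduces exactly to $t^{1-\gamma}$ with $\gamma=\frac{\nu-1}{2}$, so that $I$ is a constant multiple of $(xzy)^{-\gamma}\int_0^\infty J_\gamma(xt)J_\gamma(zt)J_\gamma(yt)\,t^{1-\gamma}\,dt$. For this classical integral of a product of three equal-order Bessel functions (see \cite{Watson}, or a table entry in \cite{IR2}) the value is zero unless $x$, $z$, $y$ are the sides of a possibly degenerate triangle, i.e. unless $|x-z|<y<x+z$, and in that range it is proportional to a power of the triangle area $\Delta$ given by Heron's formula. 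Since $16\Delta^2=(z^2-(x-y)^2)((x+y)^2-z^2)$ and the exponent $2\gamma-1=\nu-2$ produces $\Delta^{\nu-2}=\left[(z^2-(x-y)^2)((x+y)^2-z^2)\right]^{\frac{\nu}{2}-1}$, the triangle-support condition yields precisely the integration limits $\int_{|x-z|}^{x+z}$ and the bracketed weight of \eqref{Shift}. Collecting the remaining powers of $x$, $z$ and $y$ — here the identity $2\gamma=\nu-1$ makes $y^\nu(xzy)^{-2\gamma}$ collapse to the single factor $(xz)^{1-\nu}y$ — together with the surviving $\Gamma$- and power-of-two constants reproduces the prefactor of \eqref{Shift} and finishes the computation.

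It remains to verify the intertwining property \eqref{Pro0} and the initial conditions \eqref{Pro}, and these I would read off from the transform form of \eqref{Shift} rather than from the integral. For \eqref{Pro0} the key fact is that the kernel $j_{\frac{\nu-1}{2}}(zt)$ is, as a function of $z$, an eigenfunction of the Bessel operator, $(B_\nu)_z\, j_{\frac{\nu-1}{2}}(zt)=-t^2 j_{\frac{\nu-1}{2}}(zt)$, while $H_\nu$ diagonalizes $(B_\nu)_x$ with the same symbol $-t^2$; applying $(B_\nu)_z$ outside and $(B_\nu)_x$ inside the composition both insert the factor $-t^2$ under the Hankel integral, so the two sides coincide. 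For \eqref{Pro} I would use $j_{\frac{\nu-1}{2}}(0)=1$, which gives ${}^\nu T_x^z f|_{z=0}=H_\nu^{-1}H_\nu f=f$, together with the fact that $j_{\frac{\nu-1}{2}}$ is even, so that $\frac{\partial}{\partial z}j_{\frac{\nu-1}{2}}(zt)|_{z=0}=t\,j_{\frac{\nu-1}{2}}'(0)=0$, which yields the vanishing of the first $z$-derivative at $z=0$. Comparison of \eqref{Shift} with the known integral kernel of the generalized translation in \cite{Levitan1,Levitan2,Levitan3} then identifies ${}^\nu T_x^z$ with that operator.
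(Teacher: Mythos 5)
Your proposal is correct, and its central computation is essentially the paper's: expand the ITCM composition $H_\nu^{-1}\left[ j_{\frac{\nu-1}{2}}(zt)H_\nu[f](t)\right]$ into a double integral, interchange the order of integration, and evaluate the resulting integral of a triple product of Bessel functions, which vanishes off the triangle range $|x-z|<y<x+z$ and produces the Heron-type weight in \eqref{Shift}. The paper quotes this evaluation directly for the normalized functions $j_{\frac{\nu-1}{2}}$ as entry 2.12.42.14 of \cite{IR2}, while you reduce to ordinary $J_{\frac{\nu-1}{2}}$ via \eqref{HSM} and invoke the classical equal-order triple-product formula; this is the same key fact in different normalization, and your exponent bookkeeping ($t^{1-\gamma}$, $\Delta^{2\gamma-1}=\Delta^{\nu-2}$, the collapse to $(xz)^{1-\nu}y$) checks out. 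Where you genuinely diverge is in the verification of \eqref{Pro0} and \eqref{Pro}. The paper proves \eqref{Pro0} by noting that the derived kernel is symmetric in $x$ and $z$, so ${}^\nu T_x^z f(x)={}^\nu T_z^x f(z)$, and combining this with the ITCM intertwining \eqref{449}; you instead argue spectrally, using $(B_\nu)_z\, j_{\frac{\nu-1}{2}}(zt)=-t^2 j_{\frac{\nu-1}{2}}(zt)$ and the fact that $H_\nu$ diagonalizes $(B_\nu)_x$ with the same symbol $-t^2$, so both sides insert the same factor under the transform. For \eqref{Pro} the paper appeals to the classical representation \eqref{Shift1}, which is only stated after the theorem (a forward reference), whereas you evaluate directly in the transform domain at $z=0$ using $j_{\frac{\nu-1}{2}}(0)=1$ and the evenness of $j_{\frac{\nu-1}{2}}$. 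Your route for these two properties is more self-contained (no reliance on kernel symmetry or on the later-stated Levitan form), at the cost of needing differentiation under the integral sign; both arguments are formal at the paper's level of rigor, so this is a reasonable trade.
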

\begin{proof} We have
	$$
	\left( T^{(z)}_{\nu,\, \nu} f\right) (x) =	H_{\nu}^{-1} \left[ j_{\frac{\nu-1}{2}}(z t) H_{\nu}[f](t)\right](x)=
	$$
	$$
	=\frac{2^{1-\nu}}{\Gamma^2\left(\frac{\nu+1}{2}\right)}\int\limits_{0}^{\infty} {j}_{\frac{\nu-1}{2}} (xt)\,
	 j_{\frac{\nu-1}{2}}(z t)\,t^\nu\,dt \int\limits_{0}^{\infty }{j}_{\frac{\nu-1}{2}}(ty) f(y) y^\nu dy=
	$$	
	$$
	=\frac{2^{1-\nu}}{\Gamma^2\left(\frac{\nu+1}{2}\right)}\int\limits_{0}^{\infty}f(y) y^\nu dy \int\limits_{0}^{\infty } {j}_{\frac{\nu-1}{2}} (xt)\,{j}_{\frac{\nu-1}{2}}(ty)\,
	j_{\frac{\nu-1}{2}}(z t)\,t^\nu\,dt.
	$$	
The formula
$$
\int\limits_0^{\infty}j_{\frac{\nu-1}{2}}(t x)j_{\frac{\nu-1}{2}}(t y)
j_{\frac{\nu-1}{2}}(t z)t^\nu dt=$$
$$=
\left\{%
\begin{array}{ll}
$$0$$, & \hbox{$0<y<|x-z|$ or $y>x+z$;} \\
$$\frac{2\Gamma^3\left(\frac{\nu+1}{2} \right) }{\sqrt{\pi}\Gamma\left(\frac{\nu}{2} \right) }	\frac{[(z^2-(x-y)^2)((x+y)^2-z^2)]^{\frac{\nu}{2} -1}}{(xyz)^{\nu-1}}
$$, & \hbox{$|x-z|<y<x+z,$} \\
\end{array}%
\right.
	$$
is true (see formula 2.12.42.14 in \cite{IR2},  p. 204) for $\nu>0$. Therefore we obtain
	$$
	\left( T^{(z)}_{\nu,\, \nu} f\right) (x) =	
	$$
	$$
	=\frac{2^{1-\nu}}{\Gamma^2\left(\frac{\nu+1}{2}\right)}\frac{2\Gamma^3\left(\frac{\nu+1}{2} \right) }{\sqrt{\pi}(xz)^{\nu-1}\Gamma\left(\frac{\nu}{2} \right) }\int\limits_{|x-z|}^{x+z}f(y) y [(z^2-(x-y)^2)((x+y)^2-z^2)]^{\frac{\nu}{2} -1} dy=
	$$
	$$
	=\frac{2^\nu\Gamma\left(\frac{\nu+1}{2} \right) }{\sqrt{\pi}(4xz)^{\nu-1}\Gamma\left(\frac{\nu}{2} \right) }\int\limits_{|x-z|}^{x+z}f(y) y [(z^2-(x-y)^2)((x+y)^2-z^2)]^{\frac{\nu}{2} -1} dy
	=\,^\nu T_x^zf(x).
	$$
From the derived representation it is clear that $\,^\nu T_x^zf(x)=\,^\nu T_z^xf(z)$
and from (\ref{449}) it follows that
$
\,^\nu T_x^z (B_{\nu})_x = (B_{\nu})_x \,^\nu T_x^z,
$ consequently $	\,^\nu T_x^z (B_\nu)_x= (B_\nu)_z\,^\nu T_x^z$.

Properties (\ref{Pro}) follow easily from the representation (\ref{Shift1}). It completes the proof.
\end{proof}

More frequently used representation of generalized translation operator $\,^\nu T_z^x$ is (see \cite{Levitan1,Levitan2,Levitan3})
	\begin{equation}
	\label{Shift1}
\,^\nu T^z_xf(x)=C(\nu)\int\limits_0^\pi
f(\sqrt{x^2+z^2-2xz\cos{\varphi}})\sin^{\nu-1}{\varphi}d\varphi,
	\end{equation}
$$
C(\nu)=\left(\int\limits_0^\pi\sin^{\nu-1}{\varphi}d\varphi\right)^{-1}=
\frac{\Gamma\left(\frac{\nu+1}{2}\right)}{\sqrt{\pi}\,\,\Gamma\left(\frac{\nu}{2}\right)}.
$$
It is easy to see that it is the same as ours.

So it is possible and easy to obtain generalized translation operators by ITCM, and its basic properties follows immediately from ITCM integral representation.

\section{Integral representations of transmutations for perturbed differential  Bessel operators}

Now let us prove a general result on transmutations for perturbed  Bessel operator with potentials. These results are of technical form, they were proved many times for some special cases, it is convenient to prove the general result accurately here. It is the necessary step 2 from ITCM algorithm, it turns operators obtained by ITCM with formal transmutation property into transmutations with exact conditions on input parameters and classes of functions.

Further we will construct a transmutation  operator $S_{\nu,\mu}$ intertwining Bessel operators $B_\nu$ and $B_\mu$. In this case it is reasonable to use
the Hankel transforms of orders $\nu$ and $\mu$ respectively. So for a pair of perturbed Bessel differential operators
$$
A=B_\nu+q(x),\qquad B=B_\mu+r(x)
$$
we seek for a transmutation operator $S_{\nu,\mu}$ such that
\begin{equation}\label{2Bes}
S_{\nu,\mu}(B_\nu+q(x))u=(B_\mu+r(x))S_{\nu,\mu}u.
\end{equation}
Let apply ITCM and obtain it in the form
$$
S_{\nu,\mu}=H^{-1}_\mu \frac{1}{w(t)} H_\nu
$$
with arbitrary $w(t), w(t)\neq 0$.
So we have formally
$$
S_{\nu,\mu}f(x)	=\frac{2^{1-\mu}}{\Gamma^2\left(\frac{\mu+1}{2}\right)}
\int\limits_{0}^{\infty} {j}_{\frac{\mu-1}{2}} (xt)\,
\frac{t^{\mu}}{w(t)}\,dt \int\limits_{0}^{\infty }{j}_{\frac{\nu-1}{2}}(ty) f(y) y^\nu dy=
$$
$$
=\frac{2^{1-\mu}}{\Gamma^2\left(\frac{\mu+1}{2}\right)}
\int\limits_{0}^{\infty}f(y) y^\nu dy \int\limits_{0}^{\infty }{j}_{\frac{\nu-1}{2}}(ty)  {j}_{\frac{\mu-1}{2}} (xt)\,
\frac{t^{\mu}}{w(t)}\,dt.
$$

For all known cases we may represent transmutations $S_{\nu,\mu}$ in the next general form (see  \cite{Sta}, \cite{volk})
$$
S_{\nu,\mu}f(x)=a(x)f(x)+\int\limits_0^x K(x,y)f(y)y^\nu dy+\int\limits_x^\infty L(x,y)f(y)y^\nu dy.
$$
Necessary conditions on  kernels $K$ and $L$ as well as on  functions $a(x), f(x)$ to satisfy \eqref{2Bes} are given in the following theorem.

\begin{theorem}\label{teo1}	Let  $u\in L_2(0,\infty)$ be twice continuously differentiable on $[0,\infty)$ such that $u'(0)=0$,  $q$ and $r$ be such functions that
	$$
	\int\limits_0^\infty t^\delta|q(t)|dt<\infty,\qquad \int\limits_0^\infty t^\varepsilon|r(t)|dt<\infty
	$$
	for some $\delta<\frac{1}{2}$ and $\varepsilon<\frac{1}{2}$.
	When there  exists a transmutation operator of the form
	\begin{equation}\label{Tra}
	S_{\nu,\mu}u(x)=a(x)u(x)+\int\limits_0^x K(x,t)u(t)t^\nu dt+\int\limits_x^\infty L(x,t)u(t)t^\nu dt,
	\end{equation}
	such that
	\begin{equation}\label{Ur}
	S_{\nu,\mu}\biggl[B_\nu+q(x)\biggr]u(x)=\biggl[B_\mu+r(x)\biggr]S_{\nu,\mu}u(x)
		\end{equation}
	with twice continuously differentiable kernels $K(x,t)$ and $L(x,t)$ on $[0,\infty)$ such that
	$$
	\lim\limits_{t\rightarrow 0}t^\nu K(x,t) u'(t)=0,
	\qquad
	\lim\limits_{t\rightarrow 0}t^\nu K_t(x,t)u(t)=0
	$$
	and
	$$
	\lim\limits_{t\rightarrow \infty}t^\nu L(x,t) u'(t)=0,
	\qquad
	\lim\limits_{t\rightarrow \infty}t^\nu L_t(x,t)u(t)=0
	$$
	satisfying the next relations	
	$$
	\biggl[ (B_\nu)_t+q(t)\biggr]  K(x,t)=\biggl[ (B_\mu)_x+r(x)\biggr] K(x,t),
	$$
	$$
	\biggl[ (B_\nu)_t+q(t)\biggr] L(x,t)=\biggl[ (B_\mu)_x+r(x)\biggr]  L(x,t),
	$$
	and
	$$
	a(x)\biggl[B_\nu +q(x)\biggr]u(x)-\biggl[B_\mu+r(x)\biggr] a(x)u(x)
	=
	$$
	$$
	=(\mu+\nu)x^{\nu-1}u(x)\biggl[K(x,x)-L(x,x)\biggr]+2x^\nu u(x)\biggl[ K'(x,x)-L'(x,x)\biggr].
	$$	
\end{theorem}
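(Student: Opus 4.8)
The plan is to substitute the integral ansatz \eqref{Tra} into both sides of the intertwining relation \eqref{Ur} and to match the resulting expressions term by term, separating genuine integral contributions from pointwise (``diagonal'') ones. The essential tool is the self--adjoint (divergence) form of the Bessel operator, namely $t^\nu (B_\nu)_t u(t) = \frac{d}{dt}\bigl(t^\nu u'(t)\bigr)$, which turns every weighted integral against $(B_\nu)_t u$ into something amenable to integration by parts.

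First I would expand the left--hand side $S_{\nu,\mu}\bigl[(B_\nu+q)u\bigr](x)$: the operator $B_\nu+q$ acts on the variable $t$ inside the two integrals, so using the divergence form I would integrate by parts twice in each of $\int_0^x K(x,t)(B_\nu)_t u(t)\,t^\nu dt$ and $\int_x^\infty L(x,t)(B_\nu)_t u(t)\,t^\nu dt$, thereby transferring $(B_\nu)_t$ onto the kernels $K$ and $L$. Each integration by parts produces boundary terms at $t=0$, at $t=\infty$, and at the diagonal $t=x$; the terms at $0$ and $\infty$ are exactly the ones annihilated by the four assumed limit conditions on $K,K_t,L,L_t$, while the diagonal terms survive and will be collected later. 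The outcome is that the left side equals $a(x)[B_\nu+q(x)]u(x)$, plus the two integrals $\int_0^x [(B_\nu)_t+q(t)]K\,u\,t^\nu dt$ and $\int_x^\infty[(B_\nu)_t+q(t)]L\,u\,t^\nu dt$, plus diagonal terms of the shape $x^\nu u'(x)[K(x,x)-L(x,x)]$ and $-x^\nu u(x)[K_t(x,x)-L_t(x,x)]$.

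Next I would expand the right--hand side $[B_\mu+r(x)]S_{\nu,\mu}u(x)$ by applying $(B_\mu)_x=\partial_x^2+\frac{\mu}{x}\partial_x$ directly. Here the subtlety is that $x$ appears both in the integrands and as a limit of integration, so I would differentiate using the Leibniz rule; the first $x$--derivative of $\int_0^x K u\,t^\nu dt$ contributes $x^\nu K(x,x)u(x)$, and a second differentiation produces further diagonal terms involving $K(x,x)$, $K_x(x,x)$, $K_t(x,x)$ (through the total diagonal derivative $\frac{d}{dx}K(x,x)=K_x(x,x)+K_t(x,x)$), together with $u(x)$ and $u'(x)$, while the factor $\frac{\mu}{x}$ supplies the $(\mu+\nu)x^{\nu-1}$ weight. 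The same computation for $\int_x^\infty L u\,t^\nu dt$ carries an overall sign change because $x$ is now the lower limit. This shows the right side equals $[B_\mu+r(x)](a(x)u(x))$, plus $\int_0^x[(B_\mu)_x+r(x)]K\,u\,t^\nu dt + \int_x^\infty[(B_\mu)_x+r(x)]L\,u\,t^\nu dt$, plus an analogous bundle of diagonal terms.

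Finally I would compare the two sides. Requiring the genuine integral parts to agree for every admissible $u$ forces the two stated PDEs $[(B_\nu)_t+q(t)]K=[(B_\mu)_x+r(x)]K$ and the same identity for $L$, which disposes of the kernel conditions. Subtracting these equal integrals leaves only the pointwise identity; cancelling the common $x^\nu u'(x)[K(x,x)-L(x,x)]$ term and regrouping the $K_t,L_t,K_x,L_x$ contributions via $K'(x,x)=K_x(x,x)+K_t(x,x)$ yields exactly the claimed relation for $a(x)$, with the coefficients $(\mu+\nu)x^{\nu-1}$ and $2x^\nu$ emerging from the combination of the $\frac{\mu}{x}$ factor and the doubled diagonal derivatives. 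The main obstacle lies entirely in this bookkeeping of diagonal boundary terms: one must track every contribution at $t=x$ coming from the two integrations by parts on the left and from the two Leibniz differentiations on the right, and correctly interpret $K'(x,x),L'(x,x)$ as total derivatives along the diagonal; a single mismatched sign, or a forgotten term from the lower versus upper limit, would corrupt the final coefficient, so this is where the care is required.
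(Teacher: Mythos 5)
Your proposal is correct and follows essentially the same route as the paper's own proof: write $B_\nu$ in divergence form, integrate by parts twice in each integral so the limit hypotheses kill the boundary terms at $0$ and $\infty$, apply $(B_\mu)_x$ to the right side via Leibniz differentiation of the variable-limit integrals, equate the integral parts to obtain the kernel equations, and match the surviving diagonal terms using $K'(x,x)=K_x(x,t)|_{t=x}+K_t(x,t)|_{t=x}$ (and likewise for $L$) to arrive at the stated relation for $a(x)$. The bookkeeping concerns you flag (signs from the lower versus upper limit, the cancellation of the $x^\nu u'(x)[K(x,x)-L(x,x)]$ terms) are exactly the points the paper's computation handles, so there is nothing to add.
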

\begin{proof} First we have
	$$
	S_{\nu,\mu}(B_\nu u(x)+q(x)u(x))=a(x)[B_\nu u(x)+q(x)u(x)]+
	$$
	$$+\int\limits_0^x K(x,t)(B_\nu u(t)+q(t)u(t))t^\nu dt+\int\limits_x^\infty L(x,t)(B_\nu u(t)+q(t)u(t))t^\nu dt.
	$$	
	Substituting Bessel operator in the form $B_\nu=\frac{1}{t^\nu}	\frac{d}{dt}t^\nu\frac{d}{dt}$ and integrating by parts we obtain	
	$$
	\int\limits_0^x K(x,t)(B_\nu u(t))\, t^\nu dt=\int\limits_0^x K(x,t)
	\frac{d}{dt}t^\nu\frac{d}{dt}u(t)dt=
	$$
	$$
	=K(x,t)t^\nu u'(t)\biggr|_{t=0}^x-	
	\int\limits_0^x t^\nu K_t(x,t) \frac{d}{dt}u(t)dt=
	$$
	$$
	=K(x,t)t^\nu u'(t)\biggr|_{t=0}^x-t^\nu K_t(x,t)u(t)\biggr|_{t=0}^x+	
	\int\limits_0^x \left( (B_\nu)_t K_t(x,t)\right)  u(t)t^\nu dt.
	$$
	Since
	$$
	\lim\limits_{t\rightarrow 0}t^\nu K(x,t) u'(t)=0,
	\qquad
	\lim\limits_{t\rightarrow 0}t^\nu K_t(x,t)u(t)=0
	$$
	we obtain
	$$
	\int\limits_0^x K(x,t)(B_\nu u(t))\, t^\nu dt=$$
	$$=K(x,x)x^\nu u'(x)-x^\nu u(x) K_t(x,t)\biggr|_{t=x}+	
	\int\limits_0^x \left( (B_\nu)_t K(x,t)\right)  u(t)t^\nu dt.
	$$
	Similarly
	$$
	\int\limits_x^\infty L(x,t) (B_\nu u(t))\, t^\nu dt=\int\limits_x^\infty L(x,t)\frac{d}{dt}t^\nu\frac{d}{dt}u(t)dt=
	$$
	$$
	=L(x,t)t^\nu u'(t)\biggr|_{t=x}^\infty-	
	\int\limits_x^\infty t^\nu L_t(x,t) \frac{d}{dt}u(t)dt=
	$$
	$$
	=L(x,t)t^\nu u'(t)\biggr|_{t=x}^\infty-t^\nu L_t(x,t)u(t)\biggr|_{t=x}^\infty+	
	\int\limits_x^\infty \left( (B_\nu)_t L(x,t)\right)  u(t)t^\nu dt.
	$$
	Since
	$$
	\lim\limits_{t\rightarrow \infty}t^\nu L(x,t) u'(t)=0,
	\qquad
	\lim\limits_{t\rightarrow \infty}t^\nu L_t(x,t)u(t)=0
	$$
	we obtain
	$$
	\int\limits_x^\infty L(x,t) (B_\nu u(t))\, t^\nu dt=
	$$
	$$
	=-L(x,x)x^\nu u'(x)+x^\nu u(x)L_t(x,t)\biggr|_{t=x}+	
	\int\limits_x^\infty \left( (B_\nu)_t L(x,t)\right)  u(t)t^\nu dt.
	$$
	Therefore
	$$
	S_{\nu,\mu}(B_\nu u(x)+q(x)u(x))=a(x)\biggl[B_\nu +q(x)\biggr]u(x)+
	$$
	$$
	+x^\nu K(x,x) u'(x)-x^\nu u(x) K_t(x,t)\biggr|_{t=x}-x^\nu L(x,x) u'(x)+x^\nu u(x)L_t(x,t)\biggr|_{t=x}+
	$$
	$$+\int\limits_0^x \left( (B_\nu)_t K(x,t)+q(t)K(x,t)\right)  u(t)t^\nu dt+\int\limits_x^\infty \left( (B_\nu)_t L(x,t)+q(t)L(x,t)\right)  u(t)t^\nu dt.
	$$

	Further we have
	$$
	(B_\mu+r(x))S_{\nu,\mu}u(x)=$$
	$$=(B_\mu+r(x))\left(a(x)u(x)+\int\limits_0^x K(x,t)u(t)t^\nu dt+\int\limits_x^\infty L(x,t)u(t)t^\nu dt \right)=
	$$
	$$
	=B_\mu\left[ a(x)u(x)\right]+a(x)r(x)u(x)+B_\mu\int\limits_0^x K(x,t)u(t)t^\nu dt+B_{\mu}\int\limits_x^\infty L(x,t)u(t)t^\nu dt+
	$$
	$$
	+r(x)\int\limits_0^x K(x,t)u(t)t^\nu dt+r(x)\int\limits_x^\infty L(x,t)u(t)t^\nu dt.
	$$
	Using formula of differentiation of  integrals depending on the parameter we get
	$$
	(B_\mu)_x\int\limits_0^x K(x,t)u(t)t^\nu dt=\frac{1}{x^\mu}\frac{d}{dx}x^\mu\frac{d}{dx}\int\limits_0^x K(x,t)u(t)t^\nu dt=
	$$
	$$
	=\frac{1}{x^\mu}\frac{d}{dx}\left( x^{\mu+\nu} K(x,x)u(x)+ x^\mu\int\limits_0^x K_x(x,t)u(t)t^\nu dt\right) =
	$$	
	$$
	=\frac{1}{x^\mu}\biggl((\mu+\nu)x^{\mu+\nu-1} K(x,x)u(x)+x^{\mu+\nu} K'(x,x)u(x)+x^{\mu+\nu} K(x,x)u'(x)+
	$$
	$$
	+\mu x^{\mu-1}\int\limits_0^x K_x(x,t)u(t)t^\nu dt+ x^\mu\frac{d}{dx}\int\limits_0^x K_x(x,t)u(t)t^\nu dt\biggr) =
	$$	
	$$
	=\frac{1}{x^\mu}\biggl((\mu+\nu)x^{\mu+\nu-1} K(x,x)u(x)+x^{\mu+\nu} K'(x,x)u(x)+x^{\mu+\nu} K(x,x)u'(x)+
	$$
	$$
	+x^{\mu+\nu} u(x)K_x(x,t)\biggr|_{t=x}+\mu x^{\mu-1}\int\limits_0^x K_x(x,t)u(t)t^\nu dt+
	x^\mu\int\limits_0^x K_{xx}(x,t)u(t)t^\nu dt\biggr) =
	$$	
	$$
	=(\mu+\nu)x^{\nu-1} K(x,x)u(x)+x^{\nu} K'(x,x)u(x)+x^{\nu} K(x,x)u'(x)+
	$$
	$$
	+ x^{\nu} u(x)K_x(x,t)\biggr|_{t=x}+
	\int\limits_0^x (B_{\mu})_xK(x,t)u(t)t^\nu dt
	$$	
	and also
	$$
	(B_{\mu})_x\int\limits_x^\infty L(x,t)u(t)t^\nu dt=\frac{1}{x^\mu}\frac{d}{dx}x^\mu\frac{d}{dx}\int\limits_x^\infty L(x,t)u(t)t^\nu dt=
	$$	
	$$
	=\frac{1}{x^\mu}\frac{d}{dx}\left(-x^{\mu+\nu} L(x,x)u(x)+ x^\mu\int\limits_x^\infty L_x(x,t)u(t)t^\nu dt\right) =
	$$	
	$$
	=\frac{1}{x^\mu}\biggl(-(\mu+\nu)x^{\mu+\nu-1} L(x,x)u(x)-x^{\mu+\nu} L'(x,x)u(x)-x^{\mu+\nu} L(x,x)u'(x)+
	$$
	$$
	+ \mu x^{\mu-1}\int\limits_x^\infty L_x(x,t)u(t)t^\nu dt+
	x^\mu\frac{d}{dx}\int\limits_x^\infty L_x(x,t)u(t)t^\nu dt\biggr) =
	$$		
	$$
	=\frac{1}{x^\mu}\biggl(-(\mu+\nu)x^{\mu+\nu-1} L(x,x)u(x)-x^{\mu+\nu} L'(x,x)u(x)-x^{\mu+\nu} L(x,x)u'(x)+
	$$
	$$
	-x^{\mu+\nu}u(x)L_x(x,t)\biggr|_{t=x}+ \mu x^{\mu-1}\int\limits_x^\infty L_x(x,t)u(t)t^\nu dt
	+
	x^\mu\int\limits_x^\infty L_{xx}(x,t)u(t)t^\nu dt\biggr)=
	$$		
	$$
	=-(\mu+\nu)x^{\nu-1} L(x,x)u(x)-x^{\nu} L'(x,x)u(x)-x^{\nu} L(x,x)u'(x)-
	$$
	$$
	-x^{\nu}u(x)L_x(x,t)\biggr|_{t=x}+
	\int\limits_x^\infty (B_\mu)_x L(x,t)u(t)t^\nu dt.
	$$	
	So	
	$$
	(B_\mu+r(x))S_{\nu,\mu}u(x)=$$
	$$
	=\biggl[B_\mu+r(x)\biggr] a(x)u(x)+(\mu+\nu)x^{\nu-1}K(x,x)u(x)+x^\nu K'(x,x) u(x)+$$
	$$+x^\nu K(x,x)u'(x)+x^\nu u(x)K_x(x,t)\biggr|_{t=x}-
	$$
	$$
	-(\mu+\nu)x^{\nu-1} L(x,x)u(x)-x^{\nu} L'(x,x)u(x)-x^{\nu} L(x,x)u'(x)-x^{\nu}u(x)L_x(x,t)\biggr|_{t=x}+
	$$
	$$+
	\int\limits_0^x \biggl[(B_\mu)_xK(x,t)+r(x)K(x,t)\biggr]  u(t)t^\nu dt+\int\limits_x^\infty \biggl[ (B_\mu)_xL(x,t)+r(x)L(x,t)\biggr]  u(t)t^\nu dt.
	$$
	Since we shoud have an equality	
	$$
	S_{\nu,\mu}(B_\nu+q(x))=(B_\mu+r(x))S_{\nu,\mu},
	$$
	then equating the corresponding terms in both parts we obtain
	$$
	\int\limits_0^x \biggl[ (B_\nu)_t K(x,t)+q(t)K(x,t)\biggr]   u(t)t^\nu dt=\int\limits_0^x \biggl[(B_\mu)_xK(x,t)+r(x)K(x,t)\biggr]  u(t)t^\nu dt
	$$
	and
	$$
	\int\limits_x^\infty \biggl[ (B_\nu)_t L(x,t)+q(t)L(x,t)\biggr]  u(t)t^\nu dt=\int\limits_x^\infty \biggl[ (B_\mu)_xL(x,t)+r(x)L(x,t)\biggr]  u(t)t^\nu dt.
	$$
	From that we derive two equations
	$$
	\biggl[ (B_\nu)_t+q(t)\biggr] K(x,t)= \biggl[ (B_\mu)_x+r(x)\biggr] K(x,t)
	$$
	and
	$$
	\biggl[ (B_\nu)_t+q(t)\biggr]  L(x,t)=\biggl[(B_\mu)_x+r(x)\biggr]  L(x,t).
	$$
	Because of
	$$
	K'(x,x)=K_x(x,t)\biggr|_{t=x}+ K_t(x,t)\biggr|_{t=x}
	$$
	and
	$$
	L'(x,x)=L_x(x,t)\biggr|_{t=x}+ L_t(x,t)\biggr|_{t=x}
	$$
	we get that
	$$
	a(x)\biggl[B_\nu +q(x)\biggr]u(x)+
	$$
	$$
	+x^\nu K(x,x) u'(x)-x^\nu u(x) K_t(x,t)\biggr|_{t=x}-x^\nu L(x,x) u'(x)+x^\nu u(x)L_t(x,t)\biggr|_{t=x}=
	$$
	$$
	=\biggl[B_\mu+r(x)\biggr] a(x)u(x)+(\mu+\nu)x^{\nu-1}K(x,x)u(x)+x^\nu K'(x,x) u(x)+$$
	$$+x^\nu K(x,x)u'(x)+x^\nu u(x)K_x(x,t)\biggr|_{t=x}-
	$$
	$$
	-(\mu+\nu)x^{\nu-1} L(x,x)u(x)-x^{\nu} L'(x,x)u(x)-x^{\nu} L(x,x)u'(x)-x^{\nu}u(x)L_x(x,t)\biggr|_{t=x}
	$$
	which is equvivalent to
	$$
	a(x)\biggl[B_\nu +q(x)\biggr]u(x)-\biggl[B_\mu+r(x)\biggr] a(x)u(x)
	=
	$$
	$$
	=(\mu+\nu)x^{\nu-1}u(x)\biggl[K(x,x)-L(x,x)\biggr]+2x^\nu u(x)\biggl[ K'(x,x)-L'(x,x)\biggr].
	$$
	It completes the proof of the theorem.
\end{proof}

Now consider some special cases.


We consider here some special cases of a transmutation operator $S_{\nu,\mu}$ for functions $q$ and $r$  from the theorem \ref{teo1}. Let  functions $u,q,r$ satisfy the condition of the theorem \ref{teo1}.


1. For the transmutation  in (\ref{Tra}) of the next form
$$
S_{\nu,\mu}u(x)=a(x)u(x)+\int\limits_0^x K(x,t)u(t)t^\nu dt,
$$
with intertwining property
$$
S_{\nu,\mu}\biggl[B_\nu+q(x)\biggr]u(x)=\biggl[B_\mu+r(x)\biggr]S_{\nu,\mu}u(x)
$$
a kernel $K(x,t)$ and function $a(x)$  should satisfy  relations	
$$
\biggl[ (B_\nu)_t+q(t)\biggr]  K(x,t)=\biggl[ (B_\mu)_x+r(x)\biggr] K(x,t),
$$
and
$$
a(x)\biggl[B_\nu +q(x)\biggr]u(x)-\biggl[B_\mu+r(x)\biggr] a(x)u(x)
=
$$
$$
=(\mu+\nu)x^{\nu-1}u(x)K(x,x)+2x^\nu u(x)K'(x,x).
$$	
In the particular case when $\nu=\mu,$ $r(x)=0$, $a(x)=1$  transmutations with such representations were obtained in \cite{Sta,volk}.

2. For the transmutation  in (\ref{Tra}) of the next form
$$
S_{\nu,\mu}u(x)=a(x)u(x)+\int\limits_x^\infty L(x,t)u(t)t^\nu dt,
$$
such that
$$
S_{\nu,\mu}(B_\nu+q(x))u=(B_\mu+r(x))S_{\nu,\mu}u
$$
a kernel $L(x,t)$ and function $a(x)$  should satisfy to relations		
$$
\biggl[ (B_\nu)_t+q(t)\biggr] L(x,t)=\biggl[ (B_\mu)_x+r(x)\biggr]  L(x,t)
$$
and
$$
a(x)\biggl[B_\nu +q(x)\biggr]u(x)-\biggl[B_\mu+r(x)\biggr] a(x)u(x)
=
$$
$$
=-(\mu+\nu)x^{\nu-1}u(x)L(x,x)-2x^\nu u(x)L'(x,x).
$$	


3. When one  potential in (\ref{Ur}) is equal to zero we get  for a transmutation operator
$$
S_{\nu,\mu}u(x)=a(x)u(x)+\int\limits_0^x K(x,t)u(t)t^\nu dt+\int\limits_x^\infty L(x,t)u(t)t^\nu dt,
$$
such that
$$
S_{\nu,\mu}\biggl[B_\nu+q(x)\biggr]u(x)=B_\mu S_{\nu,\mu}u(x)
$$
the next conditions on kernels $K(x,t)$, $L(x,t)$ and function $a(x)$ 		
$$
\biggl[ (B_\nu)_t+q(t)\biggr]  K(x,t)=(B_\mu)_x K(x,t),
$$
$$
\biggl[ (B_\nu)_t+q(t)\biggr] L(x,t)= (B_\mu)_x  L(x,t),
$$
and
$$
a(x)\biggl[B_\nu +q(x)\biggr]u(x)-B_\mu[a(x)u(x)]
=
$$
$$
=(\mu+\nu)x^{\nu-1}u(x)\biggl[K(x,x)-L(x,x)\biggr]+2x^\nu u(x)\biggl[ K'(x,x)-L'(x,x)\biggr].
$$	

4.  When $\mu=0$ and $r(x)\equiv 0$  in (\ref{Ur}) for a transmutation operator
$$
S_{\nu}u(x)=a(x)u(x)+\int\limits_0^x K(x,t)u(t)t^\nu dt+\int\limits_x^\infty L(x,t)u(t)t^\nu dt,
$$
such that
$$
S_{\nu}(B_\nu+q(x))u=D^2 S_{\nu}u
$$
kernels $K(x,t)$, $L(x,t)$ and function $a(x)$  should satisfy to relations	
$$
\biggl[ (B_\nu)_t+q(t)\biggr]  K(x,t)=D^2_x K(x,t),
$$
$$
\biggl[ (B_\nu)_t+q(t)\biggr] L(x,t)=D^2_x L(x,t),
$$
and
$$
a(x)\biggl[B_\nu +q(x)\biggr]u(x)-D^2_x a(x)u(x)
=
$$
$$
=\nu x^{\nu-1}u(x)\biggl[K(x,x)-L(x,x)\biggr]+2x^\nu u(x)\biggl[ K'(x,x)-L'(x,x)\biggr].
$$	


5. When both potentials  in (\ref{Ur}) are equal to zero for a transmutation operator
$$
S_{\nu,\mu}u(x)=a(x)u(x)+\int\limits_0^x K(x,t)u(t)t^\nu dt+\int\limits_x^\infty L(x,t)u(t)t^\nu dt,
$$
such that
$$
S_{\nu,\mu}B_\nu u=B_\mu S_{\nu,\mu}u
$$
kernels $K(x,t)$, $L(x,t)$ and function $a(x)$  should satisfy to relations		
$$
(B_\nu)_t  K(x,t)= (B_\mu)_x K(x,t),
$$
$$
(B_\nu)_t L(x,t)= (B_\mu)_x  L(x,t),
$$
and
$$
a(x)B_\nu u(x)- B_\mu[ a(x)u(x)]
=
$$
$$
=(\mu+\nu)x^{\nu-1}u(x)\biggl[K(x,x)-L(x,x)\biggr]+2x^\nu u(x)\biggl[ K'(x,x)-L'(x,x)\biggr].
$$


6. When both potentials  in (\ref{Ur}) are equal to zero and $\mu=\nu$ for a transmutation operator
$$
S_{\nu,\nu}u(x)=a(x)u(x)+\int\limits_0^x K(x,t)u(t)t^\nu dt+\int\limits_x^\infty L(x,t)u(t)t^\nu dt,
$$
such that
$$
S_{\nu,\nu}B_\nu u=B_\nu S_{\nu,\nu}u
$$
kernels $K(x,t)$, $L(x,t)$ and function $a(x)$  should satisfy to relations			
$$
(B_\nu)_t K(x,t)= (B_\nu)_x K(x,t),
$$
$$
(B_\nu)_t L(x,t)= (B_\nu)_x  L(x,t),
$$
and
$$
a(x)B_\nu u(x)-B_\nu[a(x)u(x)]
=
$$
$$
=2\nu x^{\nu-1}u(x)\biggl[K(x,x)-L(x,x)\biggr]+2x^\nu u(x)\biggl[ K'(x,x)-L'(x,x)\biggr].
$$


7.  When both potentials  in (\ref{Ur}) are equal to zero and $\mu=0$ for a transmutation operator
$$
S_{\nu,0}u(x)=a(x)u(x)+\int\limits_0^x K(x,t)u(t)t^\nu dt+\int\limits_x^\infty L(x,t)u(t)t^\nu dt,
$$
such that
$$
S_{\nu,0}B_\nu u=D^2 S_{\nu,0}u
$$
kernels $K(x,t)$, $L(x,t)$ and function $a(x)$  should satisfy to relations	
$$
(B_\nu)_t K(x,t)= D^2_x  K(x,t),
$$
$$
(B_\nu)_t  L(x,t)= D^2_x L(x,t)
$$
and
$$
a(x)B_\nu u(x)-D^2_x[ a(x)u(x)]
=
$$
$$
=\nu x^{\nu-1}u(x)\biggl[K(x,x)-L(x,x)\biggr]+2x^\nu u(x)\biggl[ K'(x,x)-L'(x,x)\biggr].
$$

\section{Application of  transmutations obtained by ITCM to integral representations of solutions to
 hyperbolic equations with Bessel operators}\label{APPL}

Let us solve the problem of obtaining transmutations by ITCM (step 1) and justify integral representation and proper function classes for it (step 2). Now consider applications of these transmutations to integral representations of solutions to  hyperbolic equations with Bessel operators (step 3). For simplicity we consider model equations, for them integral representations of solutions are mostly known.  More complex problems need more detailed and spacious calculations. But even for these model problems considered below application of the transmutation method based on ITCM is new, it allows more unified and simplified approach to hyperbolic equations with Bessel operators of EPD/GEPD types.

\subsection{Application of transmutations for finding general solution to EPD type equation}

Standard approach to solving differential equations is to find its general solution first, and then substitute given functions to find particular solutions. Here we will show how to obtain general solution of EPD type equation using transmutation operators.

\vskip 0.5cm

{\it Proposition 1.} General solution of the equation
\begin{equation}\label{VolnBes}
\frac{\partial^2 u}{\partial x^2}=(B_\mu)_t u,\qquad u=u(x,t;\mu)
\end{equation}
for $ 0 <\mu <1 $ is represented in the form
\begin{equation}\label{VolnBesResh}
u=\int\limits_{0}^{1}\frac{\Phi(x+t(2p-1))}
{(p(1-p))^{1-\frac{\mu}{2}}}
\,dp+t^{1-\mu}\int\limits_{0}^{1}\frac{\Psi(x+t(2p-1))}
{(p(1-p))^{\frac{\mu}{2}}}
\,dp,
\end{equation}
with a pair of arbitrary functions $\Phi, \Psi$.

\begin{proof}
First, we consider wave equation (\ref{Wave1}) when $a=1$
\begin{equation}\label{Voln}
\frac{\partial^2 u}{\partial t^2}=\frac{\partial^2 u}{\partial x^2}.
\end{equation}
General solution to this equation  has the form
\begin{equation}\label{VolnResh}
F(x+t)+G(x-t),
\end{equation}
where $F$ and $G$ are arbitrary functions. Applying operator  (\ref{Poisson}) (obtained by ITCM in section 4 !) by variable $t$ we obtain that one  solution to the equation (\ref{VolnBes})
is
$$
u_1=2C(\mu)\frac{1}{t^{\mu-1}}
\int\limits_0^{t}[F(x+z)+G(x-z)]
(t^2-z^2)^{\frac{\mu}{2}-1}\,dz.
$$
Let transform the resulting general solution as follows
$$
u_1=\frac{C(\mu)}{t^{\mu-1}}
\int\limits_{-t}^{t}\frac{F(x+z)+F(x-z)+G(x+z)+G(x-z)}{(t^2-z^2)^{1-\frac{\mu}{2}}}
\,dz.
$$
Introducing a new variable $p$ by formula $z=t(2p-1)$ we get
$$
u_1=\int\limits_{0}^{1}\frac{\Phi(x+t(2p-1))}
{(p(1-p))^{1-\frac{\mu}{2}}}
\,dp,
$$
where
$$
\Phi(x+z){=}\left[F(x+z){+}F(x-z){+}G(x+z){+}G(x-z)\right]
$$
is an arbitrary function.

It is easy to see that if $u(x,t;\mu)$ is a solution of (\ref{VolnBes}) then a function $t^{1-\mu}u(x,t;2{-}\mu)$ is also a solution of  (\ref{VolnBes}). Therefore the second solution to (\ref{VolnBes}) is
$$
u_2=t^{1-\mu}\int\limits_{0}^{1}\frac{\Psi(x+t(2p-1))}
{(p(1-p))^{\frac{\mu}{2}}}
\,dp,
$$
where $\Psi$ is an arbitrary function, not coinciding with $\Phi$.
Summing $u_1$ and $u_2$ we obtain general solution to (\ref{VolnBes}) of the form (\ref{VolnBesResh}).
From the (\ref{VolnBesResh}) we can see that for summable functions $\Phi$ and $\Psi$ such a solution exists for $ 0 <\mu <1 $.
\end{proof}

\subsection{Application of transmutations for finding general solution to GEPD type equation}

Now we derive a general solution to GEPD type equation by transmutation method.

{\it Proposition 2.} General solution to the equation
\begin{equation}\label{VolnBes1}
(B_\nu)_x u=(B_\mu)_t u,\qquad u=u(x,t;\nu,\mu)
\end{equation}
for $ 0 <\mu <1 $, $ 0 <\nu <1 $  is
	$$
	u=\frac{2\Gamma\left(\frac{\nu+1}{2}\right)}{\sqrt{\pi}
		\Gamma\left(\frac{\nu}{2}\right)}\left(x^{1-\nu}\,\int\limits_{0}^{x}(x^2- y^2)^{\frac{\nu}{2}-1}dy \int\limits_{0}^{1}\frac{\Phi(y+t(2p-1))}
	{(p(1-p))^{1-\frac{\mu}{2}}}
	\,dp+\right.
	$$
\begin{equation}\label{VolnBesResh1}
\left. +t^{1-\mu}x^{1-\nu}\,\int\limits_{0}^{x}(x^2- y^2)^{\frac{\nu}{2}-1}dy \int\limits_{0}^{1}\frac{\Psi(y+t(2p-1))}
	{(p(1-p))^{\frac{\mu}{2}}}
	\,dp.\right)
\end{equation}
\begin{proof}
	Applying Poisson operator (\ref{Poisson}) (again obtained by ITCM in section 4 !) with index $\nu$ by variable $x$ to the (\ref{VolnBesResh}) we derive general solution (\ref{VolnBesResh1})
to the equation (\ref{VolnBes1}).
\end{proof}

\subsection{Application of transmutations for finding general solution to GEPD type equation with spectral parameter}

Now let apply transmutations for finding general solution to GEPD type equation with spectral parameter.

{\it Proposition 3.} General solution to the equation
\begin{equation}\label{VolnBes2}
(B_\nu)_x u=(B_\mu)_t u+b^2u,\qquad u=u(x,t;\nu,\mu)
\end{equation}
for $ 0 <\mu <1 $, $ 0 <\nu <1 $  is
$$
u{=}\frac{2\Gamma\left(\frac{\nu+1}{2}\right)}{\sqrt{\pi}
	\Gamma\left(\frac{\nu}{2}\right)}\left(x^{1-\nu}\,\int\limits_{0}^{x}(x^2- y^2)^{\frac{\nu}{2}-1}dy \int\limits_{0}^{1}\frac{\Phi(y+t(2p-1))}
{(p(1-p))^{1-\frac{\mu}{2}}}j_{\frac{\mu}{2}-1}(2bt\sqrt{p(1-p)})
\,dp+\right.
$$
\begin{equation}\label{VolnBesResh3}
\left. +t^{1-\mu}x^{1-\nu}\,\int\limits_{0}^{x}(x^2- y^2)^{\frac{\nu}{2}-1}dy \int\limits_{0}^{1}\frac{\Psi(y+t(2p-1))}
{(p(1-p))^{\frac{\mu}{2}}}j_{-\frac{\mu}{2}}(2bt\sqrt{p(1-p)})
\,dp.\right)
\end{equation}
\begin{proof}
General solution to the equation
$$
\frac{\partial^2 u}{\partial x^2}=(B_\mu)_tu+b^2u,\qquad u=u(x,t;\mu),\qquad 0<\mu<1
$$
is (see \cite{Polyanin}, p. 328)
$$
u=\int\limits_{0}^{1}\frac{\Phi(x+t(2p-1))}
{(p(1-p))^{1-\frac{\mu}{2}}}j_{\frac{\mu}{2}-1}(2bt\sqrt{p(1-p)})
\,dp+$$
$$+t^{1-\mu}\int\limits_{0}^{1}\frac{\Psi(x+t(2p-1))}
{(p(1-p))^{\frac{\mu}{2}}}j_{-\frac{\mu}{2}}(2bt\sqrt{p(1-p)})
\,dp.
$$
Applying Poisson operator (\ref{Poisson}) (again obtained by ITCM in section 4 !) with index $\nu$ by variable $x$ to the (\ref{VolnBesResh}) we derive general solution (\ref{VolnBesResh1})
to the equation (\ref{VolnBes1}).
\end{proof}

\subsection{Application  of transmutations for finding general solutions to singular
Cauchy problems}

Using (\ref{VolnBesResh}) for $0<\mu<1$ we  find the solution of the Cauchy problem
\begin{equation}\label{VolnBesCoshy}
\frac{\partial^2 u}{\partial x^2}=(B_\mu)_t u,\qquad u=u(x,t;\mu),\qquad 0<\mu<1,
\end{equation}
\begin{equation}\label{VolnBesCoshyUs}
u(x,0;\mu)=f(x),\qquad  \left( t^\mu \frac{\partial u}{\partial t}\right)\biggr|_{t=0} =g(x).
\end{equation}
and this solution is
\begin{equation}\label{ReshVolnBesCoshy}
u=\frac{\Gamma(\mu)}{\Gamma^2\left(\frac{\mu}{2}\right) }\int\limits_0^1 \frac{f(x+t(2p-1))}{(p(1-p))^{1-\frac{\mu}{2}}}dp
+t^{1-\mu}\,\frac{\Gamma(\mu+2)}{(1-\mu)\Gamma^2\left(\frac{\mu}{2}+1\right) }\,\int\limits_0^1
\frac{g(x+t(2p-1))}{(p(1-p))^{\frac{\mu}{2}}} dp.
\end{equation}

The solution of the Cauchy problem
\begin{equation}\label{VolnBesCoshy1}
\frac{\partial^2 u}{\partial x^2}=(B_\mu)_t u,\qquad u=u(x,t;\mu),
\end{equation}
\begin{equation}\label{VolnBesCoshyUs1}
u(x,0;\mu)=f(x),\qquad  \left(\frac{\partial u}{\partial t}\right)\biggr|_{t=0} =0
\end{equation}
exists for any $\mu>0$ and has the form
\begin{equation}\label{VolnBesCoshyResh1}
u(x,t;\mu)=\frac{\Gamma(\mu)}{\Gamma^2\left(\frac{\mu}{2}\right) }\int\limits_0^1 \frac{f(x+t(2p-1))}{(p(1-p))^{1-\frac{\mu}{2}}}dp.
\end{equation}
Taking into account (\ref{InC}) we can see that it is possible to obtain solution of (\ref{VolnBesCoshy1}--\ref{VolnBesCoshyResh1}) applying Poisson operator to the solution of the Cauchy problem (\ref{Voln}--\ref{VolnBesCoshyResh1}) directly.

The solution of the Cauchy problem
\begin{equation}\label{VolnBesCoshy2}
\frac{\partial^2 u}{\partial x^2}=(B_\mu)_t u,\qquad u=u(x,t;\mu),
\end{equation}
\begin{equation}\label{VolnBesCoshyUs2}
u(x,0;\mu)=0,\qquad   \left( t^\mu \frac{\partial u}{\partial t}\right)\biggr|_{t=0} =g(x).
\end{equation}
exists for any $\mu<1$  and has the form
$$
u(x,t;\mu)=t^{1-\mu}\,\frac{\Gamma(\mu+2)}{(1-\mu)\Gamma^2\left(\frac{\mu}{2}+1\right) }\,\int\limits_0^1
\frac{g(x+t(2p-1))}{(p(1-p))^{\frac{\mu}{2}}} dp.
$$

The Cauchy problem (\ref{VolnBesCoshy}--\ref{VolnBesCoshyUs}) can be considered for $ \mu\notin(0,1)$.
In this case, to obtain the solution the transmutation operator (\ref{Theo1}) (obtained in section 4 by ITCM !) should be used. The case $\mu=-1,-3,-5,...$ is exceptional and has to be studied separately.

It is easy to see that if we know that generalised translation (obtained in section 4 by ITCM !) has properties
(\ref{Pro0}--\ref{Pro}) we can in a straightway obtain that the solution to the equation
$$
(B_\mu)_x u=(B_\mu)_t u,\qquad u=u(x,t;\mu)
$$
with initial conditions
$$u(x,0)=f(x),\qquad u_t(x,0)=0$$
 is $u=\label{key}\,^\mu T_x^tf(x)$.
Now  the first and the second descent operators (\ref{OPDBess}) and (\ref{desent}) (obtained in section 4 by ITCM !)  allow to
represent the solution to the Cauchy problem
\begin{equation}\label{VolnBesCoshy4}
(B_\mu)_x u=(B_\nu)_t u,\qquad u=u(x,t;\mu,\nu),
\end{equation}
\begin{equation}\label{VolnBesCoshyUs4}
u(x,0;\mu,\nu)=f(x),\qquad  \left(\frac{\partial u}{\partial t}\right)\biggr|_{t=0} =0.
\end{equation}
For $0<\mu<\nu$ a solution of (\ref{VolnBesCoshy4}--\ref{VolnBesCoshyUs4}) is derived by using (\ref{OPDBess}) and has the form
\begin{equation}\label{SolEx1}
u(x,t;\mu,\nu){=}\frac{ 2\Gamma\left(\frac{\nu+1}{2}\right)}{\Gamma\left(\frac{\nu-\mu}{2}\right)
	\Gamma\left(\frac{\mu+1}{2}\right)}\,t^{1-\nu}\,
\int\limits_0^t(t^2-y^2)^{\frac{\nu-\mu}{2}-1}\,^\mu T^{y}_xf(x)\,y^\mu
dy.
\end{equation}
In the case $0{<}\nu {<} \mu$ by using
(\ref{desent}) we get a solution to (\ref{VolnBesCoshy4}--\ref{VolnBesCoshyUs4}) in the form
\begin{equation}
\label{SolEx2}
u(x,t;\mu,\nu){=} =\frac{2\Gamma\left(\mu-\nu\right)}
{\Gamma^2\left(\frac{\mu-\nu}{2}\right)}
\,\int\limits_{t}^{\infty }
(y^2-t^2)^{\frac{\mu-\nu}{2}-1} \,^\mu T^{y}_xf(x)\, ydy.
\end{equation}

Let consider a Cauchy problem for the  equation
\begin{equation}\label{VolnBes3}
(B_\mu)_x u=(B_\nu)_t u+b^2u,\qquad u=u(x,t;\nu,\mu)
\end{equation}
\begin{equation}\label{Cond}
u(x,0;\nu,\mu)=f(x),\qquad u_t(x,0;\nu,\mu)=0.
\end{equation}
for $ 0 <\mu <1 $, $ 0 <\nu <1 $.
Applying (\ref{Shift}), descent operators (\ref{OPDBess})  and  (\ref{desent})
 we obtain that solution of (\ref{VolnBes3}--\ref{Cond})
in the case $0<\mu<\nu$ is
\begin{equation}\label{Sol01}
u(x,t;\mu,\nu)=\frac{2\Gamma\left(\frac{\nu+1}{2}\right)}
{\Gamma\left(\frac{\nu-\mu}{2}\right)\Gamma\left(\frac{\mu+1}{2}\right)}\times$$
$$\times\,t^{1-\nu}\int\limits_0^t (t^2-y^2)^{\frac{\nu-\mu}{2}-1} j_{\frac{\nu-\mu}{2}-1}\left(b\sqrt{t^2-y^2}\right) \,^\mu T^{y}_xf(x)y^{\mu}dy.
\end{equation}
and in the case $0<\nu < \mu$ is
$$
u(x,t;\mu,\nu){=}
$$
\begin{equation}
\label{Sol02}
 =\frac{2\Gamma\left(\mu-\nu\right)}
{\Gamma^2\left(\frac{\mu-\nu}{2}\right)}
\,\int\limits_{t}^{\infty }
(y^2-t^2)^{\frac{\mu-\nu}{2}-1}j_{\frac{\mu-\nu}{2}-1}\left(b\sqrt{t^2-y^2}\right) \,^\mu T^{y}_xf(x)\, ydy.
\end{equation}


\begin{thebibliography}{99}
	
	

	
	
		\bibitem{Bajlekova0}  E. G. Bajlekova;  \emph{ Fractional evolution equations in Banach spaces},   Technische Universiteit Eindhoven, Thesis, 2001.
		
		\bibitem{Bajlekova1} E. G. Bajlekova;  \emph{Subordination principle for fractional evolution equations}, Fractional Calculus and Applied Analysis, 3 (3) (2000), 213--230
	
		\bibitem{Lesha1} A. V. Borovskikh;  \emph{The formula for propagating waves for a one-dimensional inhomogeneous medium},  Differ. Eq., Minsk, 38 (6) (2002), 758--767.
		
		\bibitem{Lesha2} A. V. Borovskikh;  \emph{The method of propagating waves},
		Trudy seminara I.G. Petrovskogo: Moscow, 24 (2004), 3--43.
	
		\bibitem{Bresters2}  D. W.  Bresters;  \emph{On a Generalized Euler--Poisson--Darboux Equation}, SIAM J. Math. Anal., 9 (5) (1978),  924--934.
	
	\bibitem{CSh} R. W. Carroll, R.E. Showalter;  \emph{Singular and Degenerate Cauchy problems},  Academic Press, New York, 1976.

\bibitem{Car1} R. W. Carroll;  \emph{Transmutation and Operator Differential Equations},  North Holland,  1979.

\bibitem{Car2} R. W. Carroll;  \emph{Transmutation, Scattering Theory and Special Functions},  North Holland, 1982.

\bibitem{Car3} R. W. Carroll;  \emph{Transmutation Theory and Applications}, North Holland, 1985.


	
		\bibitem{Castillo1}
		R. Castillo-P\'{e}rez, V. V. Kravchenko and S. M. Torba;  \emph{Spectral parameter power series for perturbed Bessel equations}, Appl.
		Math. Comput., 220 (2013), 676--694.

\bibitem{CFH}  H. Chebli, A. Fitouhi, M.M. Hamza;   \emph{Expansion in series of Bessel functions and transmutations for perturbed Bessel operators},  J. Math. Anal. Appl, 181 (3) (1994), 789--802.
	
	\bibitem{Curant}  R. Courant, D. Hilbert; \emph{Methods of Mathematical Physics, vol II}, Interscience (Wiley) New York, 1962.
	
		\bibitem{Darboux} G. Darboux; \emph{  Le\c{c}ons sur la th\'{e}orie g\'{e}n\'{e}rale des surfaces et les applications g\'{e}om\'{e}triques du calcul
		infinit\'{e}simal, vol. 2}, 2nd edn, Gauthier--Villars, Paris, 1915.
		
		\bibitem{Diaz}  J. B. Diaz,  H. F.  Weinberger; \emph{A solution of the singular initial value problem for the Euler--Poisson--equation},  Proc. Amer. Math. Soc., 4 (1953), 703--715.
		
		\bibitem{Dim} I. Dimovski;  \emph{Convolutional Calculus}. Springer, 1990.

\bibitem{EIK} S.D. Eidelman,  S.D. Ivasyshen, A.N. Kochubei, \emph{Analytic Methods In The Theory Of Differential And Pseudo--Differential Equations Of Parabolic Type}, Springer, 2004.
		
		\bibitem{Euler} L. Euler; \emph{ Institutiones calculi integralis}, Opera Omnia, Leipzig, Berlin, 1 (13) (1914), 212--230.
	
	\bibitem{Evans} L. Evans; \emph{Partial Differential Equations}, American Mathematical Society, Providence, 1998.

\bibitem{FH} A. Fitouhi, M.M. Hamza, \emph{Uniform expansion for eigenfunction of singular second order differential operator}, SIAM J. Math. Anal. 21 (6) (1990), 1619--1632.
	
	\bibitem{Fox} D.N. Fox; \emph{The solution and Huygens’ principle for a singular Cauchy problem},
	J. Math. Mech., 8 (1959), 197--219.

\bibitem{Rad2} M. Ghergu, V. Radulescu; \emph{Singular Elliptic Problems. Bifurcation and Asymptotic Analysis}, Oxford Lecture Series in Mathematics and Its Applications, vol. 37, Oxford University Press, 2008.


\bibitem{GKSh} 	A.V. Glushak, V.I. Kononenko, S.D. Shmulevich;  \emph{A Singular Abstract Cauchy Problem}, Soviet Mathematics (Izvestiya VUZ. Matematika), 30 (6) 1986, 78--81.	
	

	\bibitem{Glushak0} 	A.V. Glushak;  \emph{The Bessel Operator Function}, Dokl. Rus. Akad. Nauk,
	352 (5) (1997), 587--589.
	
	\bibitem{Glushak1}  A. V. Glushak, O. A. Pokruchin;  \emph{ A criterion on solvability of the Cauchy problem for an abstract Euler--Poisson--Darboux equation}, Differential Equations,  52 (1) (2016),  39--57.
	
	\bibitem{Glushak2} A. V. Glushak;  \emph{ Abstract Euler--Poisson--Darboux equation with nonlocal condition}, Russian Mathematics (Izvestiya VUZ. Matematika), 60 (6) (2016), 21--28.
	
	\bibitem{Glushak3} A. V. Glushak, V. A. Popova;   \emph{Inverse problem for Euler--Poisson--Darboux abstract differential equation}, Journal of Mathematical Sciences,  149 (4) (2008), 1453--1468.

\bibitem{Gul1} V.S. Guliev, \textit{Sobolev theorems for B--Riesz potentials}, Doklady
of the Russian Academy of Sciences, 358 (4) (1998), 450--451.

	
		\bibitem{Hromov} A. P. Hromov; \emph{ Finite--dimensional perturbations of Volterra operators},  Modern mathematics. Fundamental directions, 10 (2004), 3--163.
	
			\bibitem{Sit0} V. V. Katrakhov and S. M. Sitnik; \emph{Factorization method in transmutation operators theory}
			In Memoria of  Boris Alekseevich Bubnov: nonclassical equations and equations of mixed type. ( editor V. N. Vragov), Novosibirsk, (1990), 104--122.

			\bibitem{Sit01} V. V. Katrakhov and S. M. Sitnik; \emph{Composition method of construction of B-elliptic, B-parabolic and B-hyperbolic transmutation operators}, Doklades of the Russian Academy of Sciences,  337 (3) (1994), 307--311.
		
		\bibitem{kipr} I. A. Kipriyanov; \emph{ Singular Elliptic Boundary Value Problems},
		Nauka, Moscow, 1997.
		
\bibitem{KiprIv}   Kipriyanov I.A., Ivanov L.A.,  \textit{Riezs potentials on the Lorentz spaces}, Mat. Sb., 130(172) 4(8) (1986), 465--474.
		
			
			\bibitem{Kir1} V. S. Kiryakova;  \emph{Generalized fractional calculus and applications}. Pitman Res Notes Math
			301, Longman Scientific \& Technical; Harlow, Co-published with John Wiley, New York, 1994.
			
			\bibitem{KTS}
			V. V. Kravchenko, S. M. Torba and J. Y. Santana-Bejarano;  \emph{Generalized qave polynomials and transmutations
			related to perturbed Bessel equations}, arXiv:1606:07850.
		
			\bibitem{KTC2017}
			V. V. Kravchenko, S. M. Torba and R. Castillo-P\'{e}rez;  \emph{A Neumann series of Bessel functions
			representation for solutions of perturbed Bessel equations},
			Applicable Analysis, 97 5 (2018), 677--704.
		
	\bibitem{Levitan1} B. M. Levitan; \emph{ Generalized translation operators and some of their applications},  Moscow, 1962.
	
	\bibitem{Levitan2} B. M. Levitan; \emph{The theory of generalized shift operators}, Nauka, Moscow, 1973.
	
	\bibitem{Levitan3}	 B. M. Levitan; \emph{ The application of generalized displacement operators to linear differential equations of the second order}, Uspekhi Mat. Nauk.  4:1 (29) (1949), 3--112.

\bibitem{Lya1} L.N. Lyakhov, \textit{Inversion of the B-Riesz potentials}, Dokl. Akad. Nauk SSSR, 321 (3)
(1991), 466--469.

	
		\bibitem{LPSh1} L. N. Lyakhov,  I. P. Polovinkin, E. L. Shishkina; \emph{On a Kipriyanov problem for a singular ultrahyperbolic equation},
		Differ. Equ., 50 (4) (2014), 513--525.	
		
		\bibitem{LPSh2} L. N. Lyakhov,  I. P. Polovinkin, E. L. Shishkina; \emph{Formulas for the Solution of the Cauchy Problem for a Singular Wave Equation with Bessel Time Operator}, Doklady Mathematics, 90 (3) (2014),  737--742.
	
	
	\bibitem{McBArt} A. C. McBride; \emph{Fractional powers of a class of ordinary differential operators}, Proc. London Math. Soc., 3 45 (1982), 519--546.

\bibitem{Rad3} Editors: E. Mitidieri, J. Serrin, V. Radulescu; \emph{Recent Trends in Nonlinear Partial Differential Equations I: Evolution Problems}, Contemporary Mathematics, vol. 594, American Mathematical Society,2013.

\bibitem{Rad4} Editors: E. Mitidieri, J. Serrin, V. Radulescu; \emph{Recent Trends in Nonlinear Partial Differential Equations II: Stationary Problems}, Contemporary Mathematics, vol. 595, American Mathematical Society,2013.

\bibitem{Nogin} Nogin V.A.,   Sukhinin E.V., \textit{Inversion and characterization of hyperbolic potentials in $L_p$-spaces},
Dokl. Acad. Nauk, 329 (5) (1993), 550--552.

	
\bibitem{OZK} H. Ozaktas, Z. Zalevsky, M. Kutay, \emph{The Fractional Fourier Transform: with Applications in Optics and Signal Processing}, Wiley, 2001.
	
		\bibitem{Poisson} S. D. Poisson; \emph{  M\'{e}moire sur l'int\'{e}gration des \'{e}quations lin\'{e}aires aux diff\'{r}ences partielles}, J. de L'\'{E}cole
		Polytechechnique,  1 (19) (1823), 215--248.
		
			\bibitem{Poisson1821} S.  D.  Poisson; \emph{ Sur l'intégration des équations linéaires
			aux différences partielles},   Journal  de  l'École  Royale  Polytechnique,    12 19   (1823),  215--224.
		
		\bibitem{Polyanin} A. D. Polyanin; \emph{Handbook of Linear Partial Differential Equations for Engineers and Scientists}, Chapman \& Hall/CRC Press, Boca Raton–London, 2002.
		
			\bibitem{IR2} A. P.	Prudnikov, Y. A. Brychkov  and O. I. Marichev; \emph{Integrals and Series, Vol. 2, Special Functions}, Gordon \& Breach Sci. Publ., New York, 1990.
			
			\bibitem{Jan}  J. Pruss ;  \emph{ Evolutionary Integral Equations and Applications}, Springer, 2012.

\bibitem{Rad1} V. Radulescu, D. Repovs;  \emph{Partial Differential Equations with Variable Exponents: Variational Methods and Qualitative Analysis}, CRC Press, Taylor \& Francis Group, 2015.

		
			\bibitem{Riman} B. Riemann; \emph{  On the Propagation of Flat Waves of Finite Amplitude}, Ouvres,  Moscow--Leningrad, (1948), 376--395.

\bibitem{Riesz} M. Riesz, \emph{L'integrale de Riemann--Liouville et le probleme de Cauchy},  Acta Mathematica,  81 (1949), 1--223.

\bibitem{SKM} S.G. Samko, A.A. Kilbas, O.L. Marichev, \textit{Fractional integrals and derivatives}, Gordon and Breach Science Publishers, Amsterdam, 1993.

\bibitem{Samp} C.H. Sampson \textit{A characterization of parabolic Lebesgue spaces}, Thesis. Rice Univ., 1968. 	

\bibitem{Shi1}  Shishkina E.L., \textit{On the boundedness of hyperbolic Riesz B--potential},
Lithuanian Mathematical Journal, 56 (4) (2016),  540--551.


			\bibitem{ShishKlein} E. L. Shishkina; \emph{  Generalized Euler--Poisson--Darboux equation and			singular Klein--Gordon equation}. IOP Conf. Series: Journal of Physics: Conf. Series, 973 (2018), 1--21.
	
		\bibitem{ShSitPul}  E. L. Shishkina  and S. M. Sitnik; \emph{   General form of the Euler-Poisson-Darboux equation and application of the transmutation method}, Electronic Journal of Differential Equations, 2017 (177) (2017), 1--20.
		
		\bibitem{SitSh1}  E. L. Shishkina  and S. M. Sitnik; \emph{  On fractional powers of Bessel operators}, Journal of Inequalities and Special Functions,  Special issue To honor Prof. Ivan Dimovski's contributions, 8:1 (2017), 49--67.
		
\bibitem{SitSh2}  E. L. Shishkina  and S. M. Sitnik; \emph{On fractional powers of the Bessel operator on
semiaxis}, Siberian Electronic Mathematical Reports, 15 (2018), 1--10. 		
				
			\bibitem{Sit1} S. M. Sitnik; \emph{Factorization and norm estimation in weighted Lebesgue spaces of  Buschman--Erdelyi operators}, Doklades of the Soviet Academy of Sciences,  320  (6) (1991), 1326--1330.
			
				\bibitem{Sit2} S. M. Sitnik; \emph{Transmutations and applications: a survey}. 2010/12/16 	arXiv preprint arXiv:1012.3741, 141 P.

\bibitem{Sit3} S. M. Sitnik; \emph{A short survey of recent results on Buschman--Erdelyi transmutations},
Journal of Inequalities and Special Functions. (Special issue To honor Prof. Ivan Dimovski's contributions),
 8 (1) (2017),  140--157.

\bibitem{Sit4} S. M. Sitnik; \emph{Buschman--Erdelyi transmutations, classification and applications},
In the book: Analytic Methods Of Analysis   And Differential Equations: Amade 2012. ( Edited by M.V. Dubatovskaya,  S.V. Rogosin),
Cambridge Scientific Publishers, Cottenham, Cambridge, (2013), 171--201.
				
\bibitem{Sita1} S. M. Sitnik; \emph{Fractional integrodifferentiations for differential Bessel operator}, in:
					Proc. of the International Symposium ``The Equations of Mixed Type and Related Problems of the Analysis and Informatics",
					Nalchik (2004), 163--167.
					
\bibitem{Sita2} S. M. Sitnik; \emph{On explicit definitions of fractional powers of the Bessel differential operator and its applications to differential equations},
					Reports of the Adyghe (Circassian) International Academy of Sciences, 12 (2) (2010),  69--75.				
	
		\bibitem{Smirnov0}	M. M. Smirnov; \emph{ Problems in the equations of mathematical physics},  Moscow, Nauka, 1973.	
		
			\bibitem{Smirnov1}	M. M. Smirnov; \emph{Degenerate Hyperbolic Equations},  Minsk, 1977.
	
		\bibitem{Ida} I. G. Sprinkhuizen-Kuyper;  \emph{A fractional integral operator corresponding to negative powers of a certain second-order differential operator},  J. Math. Analysis and Applications, 72 (1979), 674--702.
	
		\bibitem{Sta}	 V. V. Stashevskaya; \emph{On the inverse problem of spectral analysis for a differential operator with a
		singularity at zero}, Zap. Mat. Otdel. Fiz.-Mat.Fak.KhGU i
		KhMO, 25 (4) (1957), 49--86.
		
			\bibitem{Tersenov} S. A. Tersenov; \emph{ Introduction in the theory of equations degenerating on a boundary}.  Novosibirsk state university, 1973.

		\bibitem{volk} V. Y. Volk; \emph{On inversion formulas for a
		differential equation with a singularity at $x=0$},
		Uspehi Matem. Nauk, 8 (4) (1953), 141--151.
		
			\bibitem{Watson} G. N. Watson; \emph{A Treatise on the Theory of Bessel Functions}, Cambridge University Press, 1966.
		
			\bibitem{Weinstein0} A. Weinstein; \emph{ On the wave equation and the equation of Euler--Poisson}, Proceedings of Symposia in Applied Mathematics,  V, Wave motion and vibration theory, McGraw-Hill Book Company, New York-Toronto-London, (1954), 137--147.
			
			\bibitem{Weinstein12} A. Weinstein; \emph{The generalized radiation problem and the Euler--Poisson--Darboux
			equation}, Summa Brasiliensis Math., 3 (7) (1955), 125--147.
			
			\bibitem{Weinstein13} A. Weinstein; \emph{On a Cauchy problem with subharmonic initial values}, Ann. Mat.
			Pura Appl. [IV] 43 (1957), 325--340.
	
	

	

	
	
\end{thebibliography}
\end{document}